\theoremstyle{break}
\def\url@leostyle{%
  \@ifundefined{selectfont}{\def\UrlFont{\sf}}{\def\UrlFont{\small\ttfamily}}}
\def\and{{\rm and}}          %
\def\cof#1{{\rm Cof}\,{#1}}  %
\def\det#1{{\rm det}\,{#1}}  %
\def\skew{{\rm skew}}        %
\newfont{\Sf}{cmssbx10 scaled 2074}
\newbox{\assem}
\newbox{\asse}
\def\sqtwo3{{\textstyle {\sqrt{2 \over 3}}}}   %
\newcommand{\IP}{{\rm I\kern-.18em P}}           %
\newcommand{\II}{{\rm I\kern-.18em I}}           %
\newcommand{\IF}{{\rm I\kern-.25em F}}           %
\newcommand{\IE}{{\rm I\kern-.25em E}}           %
\def\IR{{\rm I\kern-.15em R}}
\newcommand{\ia}{{\rm\kern.24em                  %
   \vrule width.02em height0.9ex depth-.05ex
   \kern-.26em a}}
\newcommand{\ic}{{\rm\kern.24em                  %
   \vrule width.02em height0.9ex depth-.05ex
   \kern-.26em c}}
\newcommand{\IA}{{\rm\kern.22em                  %
    \vrule width.02em
        height0.5ex depth 0ex
    \kern-.24em A}}
\newcommand{\IC}{{\rm\kern.24em                  %
   \vrule width.02em height1.4ex depth-.05ex
   \kern-.26em C}}
\DeclareMathOperator{\sign}{sign}
\DeclareMathOperator{\dist}{dist}
\newcommand{\malpha}{\alpha}
\newcommand{\palpha}{\alpha_\text{p}}
\renewcommand{\epsilon}{\varepsilon}
\newcommand{\norm}[1]{\|#1\|}
\newcommand{\abs}[1]{\left| #1 \right|}
\newtheorem{lem}{Lemma}[section]
\newtheorem{rem}[lem]{Remark}
\newtheorem{defi}[lem]{Definition}
\newtheorem{theo}[lem]{Theorem}
\newtheorem{cor}[lem]{Corollary}
\newtheorem{prob}[lem]{Problem}
\newcommand{\leref}[1]{Lemma \ref{#1}}
\newcommand{\theref}[1]{Theorem \ref{#1}}
\newcommand{\coref}[1]{Corollary \ref{#1}}
\newcommand{\deref}[1]{Definition \ref{#1}}
\newcommand{\probref}[1]{Problem \ref{#1}}
\newcommand{\R}{\mathbb{R}}
\newcommand{\C}{\mathbb{C}}
\DeclareMathOperator{\GL}{GL}
\DeclareMathOperator{\SO}{SO}
\DeclareMathOperator{\skewop}{skew}
\renewcommand{\skew}{\skewop}
\DeclareMathOperator{\diag}{diag}
\DeclareMathOperator{\sym}{sym}
\DeclareMathOperator{\Tr}{tr}
\DeclareMathOperator{\so}{\mathfrak{so}}
\DeclareMathOperator{\gl}{\mathfrak{gl}}
\DeclareMathOperator{\polar}{R_{\rm p}}
\newcommand{\Sym}{ {\rm{Sym}} }
\newcommand{\Psym}{ {\rm{PSym}} }
\renewcommand{\qed}{\hfill $\blacksquare$\\}
\newcommand{\id}{{\boldsymbol{\mathbbm{1}}}}
\DeclareMathOperator{\Det}{det}
\renewcommand{\det}[1]{ {\Det[{#1}]} }
\newcommand{\tr}[1]{ {\Tr \left[{#1}\right]} }
\newcommand{\secref}[1]{Section \ref{#1}}
\newcommand{\figref}[1]{Figure \ref{#1}}
\definecolor{orange}{rgb}{1.0,0.5,0}
\DeclareMathOperator{\Reals}{\mathbb{R}}
\renewcommand{\R}{\Reals}
\DeclareMathOperator{\argminmathop}{argmin}
\newcommand{\argmin}[2]{\mathchoice{\underset{#1}{\argminmathop}\, {#2}}{\argminmathop_{#1}\, {#2}}{}{}}
\newcommand{\scalprod}[2]{\big<#1,\,#2\big>}
\newcommand{\setdef}[2]{\lbrace #1 \;\vert\; #2\rbrace}
\newcommand{\ddtat}[2]{\frac{\rm d}{\rm dt}\,#1\big{\vert}_{t = #2}}
\newcommand{\hsnorm}[1]{\left\lVert #1 \right\rVert}
\DeclareMathOperator{\RPosZ}{\sideset{}{_0^+}\Reals}
\DeclareMathOperator{\eqdef}{\,\mathrel{\mathop:}=\,}
\DeclareMathOperator{\isequivto}{\,\Longleftrightarrow\,}
\newcommand{\mrot}{R}
\newcommand{\mstretch}{\overline{U}}
\DeclareMathOperator{\rpolar}{rpolar}
\DeclareMathOperator{\sradmm}{\rho_{\mu,\,\mu_c}}
\DeclareMathOperator{\sradmmdef}{\frac{2\,\mu}{\mu\,-\,\mu_c}}
\DeclareMathOperator{\wmm}{W_{\mu,\mu_c}}
\DeclareMathOperator{\wsym}{W_{1,0}}
\DeclareMathOperator{\wmmtilde}{\widetilde{W}_{\mu,\,\mu_c}}
\DeclareMathOperator{\wsymtilde}{\widetilde{W}_{1,0}}
\newcommand{\countres}{
  \setcounter{equation}{0}
  \setcounter{figure}{0}
  \setcounter{table}{0}
}
\renewcommand{\theequation}{\arabic{section}.\arabic{equation}}
\renewcommand{\baselinestretch}{1.0}          %
\renewcommand{\itemize}{%
  \ifnum \@itemdepth >\thr@@\@toodeep\else
    \advance\@itemdepth\@ne
    \edef\@itemitem{labelitem\romannumeral\the\@itemdepth}%
    \expandafter
    \list
      \csname\@itemitem\endcsname
      {\def\makelabel##1{\hss\llap{##1}}%
        \topsep=.8ex\itemsep=-.2ex}%
  \fi}
\renewcommand\section{\@startsection {section}{1}{\z@}%
  {-3.5ex \@plus -1ex \@minus -.2ex}%
  {2.3ex \@plus.2ex}%
  {\boldmath\normalfont\Large\bfseries}}
\renewcommand\subsection{\@startsection{subsection}{2}{\z@}%
  {-3.25ex\@plus -1ex \@minus -.2ex}%
  {1.5ex \@plus .2ex}%
  {\boldmath\normalfont\large\bfseries}}
\renewcommand\subsubsection{\@startsection{subsubsection}{3}{\z@}%
  {-3.25ex\@plus -1ex \@minus -.2ex}%
  {1.5ex \@plus .2ex}%
  {\boldmath\normalfont\normalsize\bfseries}}
\renewcommand\paragraph{\@startsection{paragraph}{4}{\z@}%
  {3.25ex \@plus1ex \@minus.2ex}%
  {-1em}%
  {\boldmath\normalfont\normalsize\bfseries}}
\renewcommand\subparagraph{\@startsection{subparagraph}{5}{\parindent}%
  {3.25ex \@plus1ex \@minus .2ex}%
  {-1em}%
  {\boldmath\normalfont\normalsize\bfseries}}
\title{The geometrically nonlinear Cosserat micropolar shear-stretch
       energy. \mbox{Part I:}
       A general parameter reduction formula and energy-minimizing
       microrotations in 2D}
\author{Andreas Fischle
\!\!\thanks{Corresponding author: Andreas Fischle,
Fakult\"at f\"ur Mathematik, Informatik, Naturwissenschaften,
IGPM, RWTH Aachen,
Templergraben 55,
52062 Aachen,
Germany,
email: fischle@igpm.rwth-aachen.de}
\quad and \quad
Patrizio Neff
\!\!\thanks{Patrizio Neff,
Head of Lehrstuhl f\"{u}r Nichtlineare Analysis und Modellierung,
Fakult\"{a}t f\"{u}r Mathematik,
Universit\"{a}t Duisburg-Essen,
Thea-Leymann Str. 9,
45127 Essen,
Germany,
email: patrizio.neff@uni-due.de}}
\begin{document}
\topmargin-3.0cm  %
\selectfont
\vspace{-4cm}
\maketitle

\pagenumbering{arabic}

\makeatletter{}%
% start input /Volumes/HDD/Workspace/papers/2015/Cosserat2D/src/abstract.tex
\begin{center}\textbf{Abstract}\end{center}
\begin{center}
  \begin{minipage}{0.95\textwidth}
    In any geometrically nonlinear quadratic Cosserat-micropolar
    extended continuum model formulated in the deformation gradient field
    $F \eqdef \nabla\varphi: \Omega \to \GL^+(n)$
    and the microrotation field $\mrot: \Omega \to \SO(n)$,
    the shear-stretch energy is necessarily of the form
    \begin{align*}
      \wmm(\mrot\,;F)& \eqdef
      \mu  \hsnorm{\sym(\mrot^TF - \id)}^2
      + \mu_c\hsnorm{\skew(\mrot^TF - \id)}^2\;,
    \end{align*}
    where $\mu > 0$ is the Lam\'e shear modulus and $\mu_c \geq 0$
    is the Cosserat couple modulus. In the present contribution, we
    work towards explicit characterizations of the set of optimal
    Cosserat microrotations
    $\argmin{\mrot\,\in\,\SO(n)}{\wmm(\mrot\,;F)}$ as a function
    of $F \in \GL^+(n)$ and weights $\mu > 0$ and $\mu_c \geq 0$.
    For $n \geq 2$, we prove a parameter
    reduction lemma which reduces the optimality problem
    to two limit cases: $(\mu, \mu_c) = (1,1)$ and $(\mu,\mu_c) = (1,0)$.
    In contrast to Grioli's theorem,
    we derive non-classical minimizers for the parameter range
    $\mu > \mu_c \geq 0$ in dimension $n\!=\!2$. Currently, optimality
    results for $n \geq 3$ are out of reach for us, but we contribute
    explicit representations for $n\!=\!2$ which we name
    $\rpolar^{\pm}_{\mu,\mu_c}(F) \in \SO(2)$ and which
    arise for $n\!=\!3$ by fixing the rotation axis a priori. Further,
    we compute the associated reduced energy levels and study the
    non-classical optimal Cosserat rotations
    $\rpolar^\pm_{\mu,\mu_c}(F_\gamma)$ for simple planar shear.
  \end{minipage}
\end{center}

\vspace*{0.5cm}
{\bf{Key words:}}
Cosserat,
Grioli's theorem,
micropolar,
polar media,
non-symmetric stretch,
zero Cosserat couple modulus,
polar decomposition,
euclidean distance to $\SO(n)$

\vspace*{0.25cm}
{\bf{AMS 2000 subject classification:}}
  15A24,
  22L30,
  74A30,
  74A35,
  74B20,
  74G05,
  74G65,
  74N15.
%
%
%

%
%
%
%
%
%
%
%
%
%
%

%
% end input /Volumes/HDD/Workspace/papers/2015/Cosserat2D/src/abstract.tex
 %
\countres

\setcounter{tocdepth}{1}
\renewcommand{\baselinestretch}{0}\normalsize
\tableofcontents
\renewcommand{\baselinestretch}{1.0}\normalsize
\topmargin-1.0cm  %
\selectfont

\newpage
\makeatletter{}%
% start input /Volumes/HDD/Workspace/papers/2015/Cosserat2D/src/intro.tex
\section{Introduction}\label{sec:intro}
In 1940 Guiseppe Grioli proved a remarkable optimality
result~\cite{Grioli40} for the polar factor in dimension $n = 3$. To
state his result, we denote by $\polar(F) \in \SO(n)$ the unique
orthogonal factor of $F \in \GL^+(n)$ in the right polar
decomposition $F = \polar(F)\,U(F)$ and by
$U(F) = \polar(F)^TF = \sqrt{F^TF} \in \Psym(n)$
the symmetric positive definite Biot stretch tensor.
In~\cite{Grioli40} Grioli proved
the special case $n = 3$ of the following theorem:
\begin{theo}[Grioli's theorem~\cite{Grioli40,Guidugli80,Pietraszkiewicz05}]
  \label{theo:intro:grioli}
  Let $n \geq 2$ and $\hsnorm{X}^2 \eqdef \tr{X^TX}$ the Frobenius norm.
  Then for any $F \in \GL^+(n)$, it holds
\begin{equation}
  \argmin{\mrot\,\in\,\SO(n)}{\hsnorm{\mrot^TF - \id}^2} \;=\; \{\polar(F)\},
  \quad\text{and thus}\quad
  \min_{\mrot\,\in\,\SO(n)}{\hsnorm{\mrot^TF - \id}^2}
  \;=\; \hsnorm{U - \id}^2\;.\label{intro:eq:min_energy_simple}
\end{equation}
\end{theo}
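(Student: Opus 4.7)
The plan is to convert the minimization of a quadratic Frobenius-norm expression into a linear trace maximization, and then exploit the spectral structure of the stretch tensor $U$. First I would expand the squared Frobenius norm using $\|X\|^2 = \tr{X^TX}$ together with $\mrot\,\mrot^T = \id$, obtaining
\begin{equation*}
  \|\mrot^T F - \id\|^2 \;=\; \tr{F^TF} \;-\; 2\,\tr{\mrot^T F} \;+\; n\;.
\end{equation*}
Since only the middle term depends on $\mrot$, the minimization over $\SO(n)$ is equivalent to the linear problem of maximizing $\mrot \mapsto \tr{\mrot^T F}$ over $\SO(n)$.

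Next I would substitute the right polar decomposition $F = \polar(F)\,U$ and set $Q \eqdef \polar(F)^T \mrot \in \SO(n)$, which is a bijective reparametrization of $\SO(n)$. Using cyclicity of the trace this rewrites the objective as $\tr{\mrot^T F} = \tr{Q^T U}$, so that it suffices to maximize $Q \mapsto \tr{Q^T U}$ over $\SO(n)$. Diagonalizing the symmetric positive definite matrix $U$ as $U = V^T D V$ with $V \in \SO(n)$ and $D = \diag(\lambda_1,\ldots,\lambda_n)$, $\lambda_i > 0$, and substituting $\widetilde Q \eqdef V Q V^T \in \SO(n)$, the problem further reduces to bounding
\begin{equation*}
  \tr{Q^T U} \;=\; \tr{\widetilde Q^T D} \;=\; \sum_{i=1}^n \widetilde Q_{ii}\,\lambda_i\;.
\end{equation*}

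Now I would invoke the elementary fact that for any orthogonal matrix $|\widetilde Q_{ii}| \leq 1$, which combined with $\lambda_i > 0$ gives the sharp upper bound $\sum_i \widetilde Q_{ii}\lambda_i \leq \sum_i \lambda_i = \tr{U}$, with equality forcing $\widetilde Q_{ii} = 1$ for every $i$. Orthonormality of the rows of $\widetilde Q$ then forces all off-diagonal entries to vanish, hence $\widetilde Q = \id$, which unwinds to $\mrot = \polar(F)$. The uniqueness step is the one that relies most delicately on $U \in \Psym(n)$ (i.e., strict positivity of all $\lambda_i$); this is what I expect to be the sole subtle point, since if $U$ were only positive semi-definite, the minimizer would cease to be unique in the kernel directions. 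Finally, plugging $\mrot = \polar(F)$ back and using $\mrot^T F = U$ yields $\min_{\mrot \in \SO(n)} \|\mrot^T F - \id\|^2 = \|U - \id\|^2$, completing the proof. The orientation constraint $\mrot \in \SO(n)$ rather than $\Oo(n)$ is automatically respected because $\id \in \SO(n)$ and $\det F > 0$ ensures $\polar(F) \in \SO(n)$.
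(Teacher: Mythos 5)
Your proof is correct and complete. A small point worth noting: the paper itself does \emph{not} supply a proof of Grioli's theorem; it states the result in the introduction and defers to the cited references (Grioli, Guidugli, Pietraszkiewicz, and the modern exposition in Neff et al.), and even in the planar case the optimality of $\polar(F)$ is delegated to an appendix of another paper. Your argument is the standard one: expand the square to reduce the minimization to the linear trace-maximization $\max_{\mrot\in\SO(n)} \tr{\mrot^T F}$, absorb the polar factor by the substitution $Q = \polar(F)^T \mrot$, pass to principal stretch coordinates via $U = V^T D V$, and bound $\sum_i \widetilde Q_{ii}\lambda_i \le \sum_i \lambda_i$ using $|\widetilde Q_{ii}| \le 1$ for orthogonal $\widetilde Q$ together with $\lambda_i > 0$. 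You correctly isolate the one delicate point: strict positivity of the singular values of $F$ (i.e.\ $F \in \GL^+(n)$) is exactly what forces $\widetilde Q_{ii} = 1$ for all $i$ at equality, and orthonormality of the rows then forces $\widetilde Q = \id$, giving uniqueness. The only cosmetic remark is that you need not worry about choosing $V \in \SO(n)$ rather than $\Oo(n)$: conjugation $Q \mapsto VQV^T$ preserves $\SO(n)$ for any $V \in \Oo(n)$ since $\det(VQV^T) = \det Q$. In short: the proposal fills in, correctly and in the standard way, a proof that the paper intentionally leaves to the literature.
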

The optimality of the polar factor $\polar(F)$ for $n = 3$ generalizes
to any dimension $n \geq 2$, see, e.g.,~\cite{Guidugli80}, and it is
this more general theorem to which we shall refer as Grioli's theorem
in this present work. A modern exposition of the original contribution
of Grioli has been recently made available in~\cite{Neff_Grioli14}.

In contrast to Grioli's theorem, it has been noted in~\cite{Neff_Biot07} that
the polar factor $\polar(F)$ is \emph{not} necessarily optimal for a more
general formulation of Grioli's theorem with weights. Hence, our main
objective is to make progress on
\begin{prob}[Weighted optimality]
Let $n \geq 2$. Compute the set of optimal rotations
\label{intro:prob:weighted}
\begin{equation}
  \argmin{\mrot\,\in\,\SO(n)}{\wmm(\mrot\,;F)} \eqdef
  \argmin{\mrot\,\in\,\SO(n)}{%
    \left\{
    \mu\, \hsnorm{\sym(\mrot^TF - \id)}^2
    \,+\,
    \mu_c\,\hsnorm{\skew(\mrot^TF - \id)}^2\right\}
  }
  \label{eq:intro:weighted}
\end{equation}
for given $F \in \GL^+(n)$ and weights $\mu > 0,\mu_c \geq 0$
such that $\mu \neq \mu_c$. Here, $\sym(X) \eqdef \frac{1}{2}(X + X^T)$
and $\skew(X) \eqdef \frac{1}{2}(X - X^T)$.
\end{prob}
For $\mu = \mu_c$, we recover Grioli's theorem. This case is well understood.
However, no systematic analysis of the proposed weighted optimality problem
seems to exist in the literature.

Note that~\probref{intro:prob:weighted} is of independent interest in
the mechanics of micropolar media and Cosserat theory in particular.
The weighted strain energy contribution $\wmm$ arises in any geometrically
nonlinear Cosserat-micropolar model proposed in, e.g.,~\cite{Neff_Cosserat_plasticity05},
cf.~\cite{Boehmer:2015:SS,Eremeyev:2012:FMM,Lankeit:2015:IC,Sansour:2008:NCC,Neff01d,Neff_Cosserat_plasticity05,Pietraszkiewicz:2009:VPN} and for any 6-parameter geometrically
nonlinear Cosserat shell model proposed
in~\cite{Birsan_Neff_MMS_2013,Wisniewski98,Wisniewski00,Wisniewski02}.
In the beforementioned material models $\wmm$ determines the shear-stretch
energy contribution. To obtain a full Cosserat continuum model, $\wmm$
needs to be augmented by a curvature energy term~\cite{Neff_curl06} and a
volumetric energy term, see, e.g.,~\cite{Neff:2015:EGNC} or~\cite{Neff_Biot07}.
However,~\probref{intro:prob:weighted} reappears as a limit case for vanishing
characteristic length $L_{\rm c} = 0$.\footnote{For this interpretation to
  work, we have to assume that the volume term decouples from the
  \mbox{microrotation $\mrot$}, e.g.,
$W^{\rm vol}(\mstretch)
  \eqdef \frac{\lambda}{4} \left[\left(\det{\mstretch} - 1\right)^2 + \left(\frac{1}{\det{\mstretch}} - 1\right)^2\right]$, which seems natural, since it
  is trivially satisfied in the linear Cosserat models \cite{Neff_Jeong_bounded_stiffness09,Neff_ZAMM05,Neff_Jeong_Conformal_ZAMM08}. The
  linear elastic energy density is of the form
$W^{\rm lin}(\nabla u, A) \eqdef \mu\,\hsnorm{\sym(\nabla u - A)}^2
+ \mu_c\,\hsnorm{\skew(\nabla u - A)}^2
+ \frac{\lambda}{2}\,\tr{(\nabla u - A)}^2$.
Here, for \emph{all} admissible values of $\mu > 0$ and $\mu_c > 0$, the
unique minimizer is given by $A = \skew(\nabla u)$.
}
In this scenario, the curvature term vanishes identically and the set of
global minimizers for~\eqref{eq:intro:weighted} admits an interpretation
as optimal Cosserat rotations for given deformation gradient
$F \in \GL^+(n)$ and material parameters $\mu$ and $\mu_c$.
Grioli's theorem implies that for $\mu = \mu_c$ the optimal Cosserat rotation
is uniquely given by $\polar(F)$, i.e., it is the orthogonal part of the
right polar decomposition $F = \polar\,U$. The corresponding reduced energy
level is the Biot-energy, see~\cite{Neff_Biot07}.

The suitable choice of the so-called Cosserat couple modulus $\mu_c \geq 0$
for specific materials or boundary value problems is an interesting open
question~\cite{Neff_ZAMM05}. In particular,
a strictly positive choice $\mu_c > 0$ is debatable~\cite{Neff_ZAMM05} and a
better understanding of the limit case $\mu_c = 0$ is hence of interest. This
further motivates to study the weighted formulation stated
in~\probref{intro:prob:weighted}.

We want to stress that although the term $\wmm$ subject to minimization
in~\eqref{eq:intro:weighted} is quadratic in the nonsymmetric
microstrain tensor $\mstretch - \id = \mrot^TF - \id$
(see, e.g.,~\cite{Eremeyev:2012:FMM}), the associated
minimization problem with respect to $\mrot$ is nonlinear due to the
multiplicative coupling of $\mrot$ and $F$ and the geometry of $\SO(n)$.

\begin{rem}[Existence of global minimizers]
  The energy $\wmm(R\,;F)$ is a polynomial in the matrix entries,
  hence $\wmm \in C^\infty(\SO(n),\Reals)$. Further, since the Lie
  group $\SO(n)$ is compact and $\partial\!\SO(n) = \emptyset$, the
  global extrema of $\wmm$ are attained at interior points.
\end{rem}

The previous remark hints at a possible solution strategy for
\probref{intro:prob:weighted}. Suppose that we succeed to compute
\emph{all} the critical points $\mrot_\textrm{crit} \in \SO(n)$ of
$\wmm(\mrot\,;F)$.\footnote{Since the boundary of $\SO(n)$ is empty the
rotation $\mrot_\textrm{crit}$ is a critical point at $F \in \GL^+(n)$
if and only if $\ddtat{\wmm(\mrot(t)\,; F)}{0} = 0$ for every smooth
curve of rotations $\mrot(t): (-\epsilon, \epsilon) \to \SO(n)$
passing through $\mrot(0) = \mrot_\textrm{crit}$.} Then, if possible,
a direct comparison of the associated critical energy levels
$\wmm(\mrot_\textrm{crit}\,;F)$ might allow us to identify the
energy-minimizing branches for given parameters $F, \mu$ and $\mu_c$.
Note that any minimal branch coincides with the \emph{reduced}
Cosserat shear-stretch energy
\begin{align}
  W^{\rm red}_{\mu,\mu_c}: \GL^+(n) \to \RPosZ\;,\quad
  W^{\rm red}_{\mu,\mu_c}(F) \eqdef \min_{\mrot\,\in\,\SO(n)} W_{\mu,\mu_c}(\mrot\,;F)\;.
\end{align}

At present a solution for the three-dimensional problem (let alone the
$n$-dimensional problem) seems out of reach for us. Therefore, we restrict
our attention to the planar case where we can base our computations on
the standard parametrisation
\begin{equation}
  \mrot: [-\pi, \pi] \to \SO(2) \subset \R^{2\times 2},\quad \mrot(\malpha) \eqdef
  \begin{pmatrix}
    \cos \malpha & -\sin \malpha\\
    \sin \malpha & \cos \malpha
  \end{pmatrix}
\end{equation}
by a rotation angle.\footnote{Note that $\pi$ and $-\pi$ are mapped
to the same rotation. In this text, we implicitly choose $\pi$ over
$-\pi$ for the rotation angle whenever uniqueness is an issue.}

Regarding the material parameters, we prove that, for any dimension
$n \geq 2$, it is sufficient to restrict our attention to two parameter
pairs:
$(\mu,\mu_c) = (1,1)$, the \emph{classical} case, and
$(\mu,\mu_c) = (1,0)$, the \emph{non-classical} case.
We shall see that, somewhat surprisingly, the solutions for arbitrary
$\mu > 0$ and $\mu_c \geq 0$ can be recovered from these two limiting
cases. A large part of this paper is dedicated to the discussion of
the non-classical choice of material parameters in the planar case.

\begin{prob}[The planar minimization problem]
  \label{intro:prob:planar}
Let $F \in \GL^+(2)$, $\mu > 0$ and $\mu_c \geq 0$. The task is to compute
the set of optimal microrotation angles
\begin{equation}
  \argmin{\malpha\;\in\;[-\pi,\pi]}{\hsnorm{(\sqrt{\mu}\,\sym + \sqrt{\mu_c}\,\skew).\left[\begin{pmatrix}
      \cos \malpha & -\sin \malpha\\
      \sin \malpha & \cos \malpha
\end{pmatrix}^T
\begin{pmatrix}
    F_{11} & F_{12}\\
    F_{21} & F_{22}\\
\end{pmatrix}
-\begin{pmatrix}
 1 & 0\\
 0 & 1
\end{pmatrix}\right]}^2}\,.\label{eq:intro:planar}
\end{equation}
\end{prob}

It turns out that there are at most two optimal planar rotations
in what we will discern as the non-classical parameter range
$\mu > \mu_c \geq 0$. Both of them coincide with the polar factor
$\polar(F)$ in the compressive regime of $F \in \GL^+(2)$, but
deviate elsewhere; see~\secref{sec:minimization}. We denote the
explicit formulae for the optimal Cosserat rotations (i.e., solutions
to~\eqref{eq:intro:weighted}) by
$\rpolar_{\mu,\mu_c}(F)$ and the respective rotation angles
(i.e., solutions to~\eqref{eq:intro:planar})
by $\alpha_{\mu,\mu_c}(F)$ (possibly multi-valued).
The computation of global minimizers in dependence of $F$ is not
completely obvious even for the reduced planar case. We hope that
the discovered mechanisms will help to understand the cases
$n \geq 3$ eventually.

This paper is now structured as follows: we first prove a dimension-independent
parameter transformation lemma for $\mu$ and $\mu_c$ in~\secref{sec:reduction}
which allows us to focus on a classical $(\mu,\mu_c) = (1,1)$ and
a non-classical limit case $(\mu,\mu_c) = (1,0)$. In \secref{sec:minimization},
we compute the optimal planar Cosserat rotations and associated formally
reduced energies, first for the classical and non-classical limit cases
for $\mu$ and $\mu_c$, then for all admissible values of the material
parameters. Finally, the optimal rotations for the non-classical limit
case $(\mu,\mu_c) = (1,0)$ are specialized to the particular case of
planar simple shear in~\secref{sec:shear}. A short appendix provides
some elementary but useful matrix identities for $n = 2$.

% end input /Volumes/HDD/Workspace/papers/2015/Cosserat2D/src/intro.tex
 %
\countres
\makeatletter{}%
% start input /Volumes/HDD/Workspace/papers/2015/Cosserat2D/src/reduction.tex
\section{Parameter reduction for \texorpdfstring{$\mu$}{mu} and \texorpdfstring{$\mu_c$}{mu_c}}\label{sec:reduction}

The Cosserat shear-stretch energy~\eqref{eq:intro:weighted} is parametrized
by two material parameters: $\mu > 0$ and $\mu_c \geq 0$. It can be shown
that $\mu$ must coincide with the classical Lam\'e shear modulus of linear
elasticity. The interpretation of the Cosserat couple modulus $\mu_c$ is,
however, less clear. It seems that all measurements that can
be found in the literature lead to inconsistencies for $\mu_c > 0$, see,
e.g.,~\cite{Neff_ZAMM05}, and so a better understanding of these
material parameters and their interaction is of interest. In this section,
we contribute some helpful representations of the Cosserat shear-stretch
energy in terms of powers of $\tr{\,\mstretch\,} = \tr{\mrot^TF}$.

Let us introduce the following equivalence relation for continuous
functions $f,g: X \to \R$ defined on a compact set $X$:
\begin{equation}
f \sim_X g
\quad\isequivto\quad
\argmin{x\;\in\;X}{f(x)} \;=\; \argmin{x\;\in\;X}{g(x)}\;.
\end{equation}
An important example is given by $f \sim_X \lambda f + c$
for $\lambda > 0$ and $c$ both independent of $x$.\footnote{Note that
  $f$ and $\lambda f + c$ share the same critical point structure,
  whereas $f \sim g$ does not imply this.} In the present
work, we consider minimization w.r.t. $X = \SO(n)$ and we shall
simply write $f \sim g$ instead of $f \sim_{\SO(n)} g$.

In this section, we show that it is sufficient to restrict our attention
to two representative pairs of parameters, \emph{the classical limit case
$(\mu,\mu_c) = (1,1)$} and \emph{the non-classical limit case $(\mu,\mu_c) = (1,0)$}.
The solutions for arbitrary admissible $\mu$ and $\mu_c$ can then
be recovered from these two limiting cases by a suitable transformation,
see~\secref{subsec:minimization:reconstruction} for details. To this end,
we first introduce the following
\begin{defi}[Parameter rescaling]
  \label{defi:reduction:srad}
  \label{defi:reduction:lambda}
  \label{defi:reduction:rescaling}
  Let $\mu > \mu_c \geq 0$. We define the {\bf singular radius} $\sradmm$ by
  \begin{equation}
    \sradmm \eqdef \sradmmdef > 0\;,
    \quad\quad\text{and further define}\quad\quad
    \lambda_{\mu,\mu_c} \eqdef \frac{\sradmm}{\rho_{1,0}} = \frac{\mu}{\mu - \mu_c}\;,
  \end{equation}
  as the {\bf induced scaling parameter}. Note that $\rho_{1,0} = 2$ and
  $\lambda_{1,0} = 1$. Further, we define the {\bf parameter rescaling}
  given by
  \begin{equation}
    \widetilde{F}_{\mu,\mu_c} \;\eqdef\; \lambda^{-1}_{\mu,\mu_c}\,F \;=\; \frac{\mu - \mu_c}{\mu}\,F \quad\in \GL^+(n)\;.
  \end{equation}
\end{defi}
For $\mu > 0$ and $\mu_c = 0$, we obtain $\widetilde{F}_{\mu,0} = F$, i.e., the
rescaling is only effective for $\mu_c > 0$. We can now state the key result of
this section which is independent of the dimension.
\begin{lem}[Parameter reduction]
  \label{lem:parameter_reduction}
  Let $n \geq 2$ and let $F \in \GL^+(n)$, then
  \begin{equation}
    \begin{aligned}
      \mu_c \geq \mu > 0 \quad&\Longrightarrow\quad \wmm(\mrot\,;F)
      \;\sim\;
      W_{1,1}(\mrot\,;F)\;,\quad\text{and}\\
      \mu > \mu_c \geq 0 \quad&\Longrightarrow\quad \wmm(\mrot\,;F)
      \;\sim\;
      \wsym(\mrot\,;\widetilde{F}_{\mu,\mu_c})\;.
    \end{aligned}
  \end{equation}
\end{lem}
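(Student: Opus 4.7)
The strategy is to bring each of the three energies $\wmm(\mrot\,;F)$, $W_{1,1}(\mrot\,;F)$, and $\wsym(\mrot\,;\widetilde{F}_{\mu,\mu_c})$ into a common canonical form that depends on $\mrot$ only through the two scalars $\tr{\mrot^TF}$ and $\hsnorm{\skew(\mrot^TF)}^2$, and then to match the resulting forms modulo the $\mrot$-independent additive constants and positive multiplicative rescalings that the equivalence $\sim$ permits. Using Frobenius orthogonality $\hsnorm{X}^2 = \hsnorm{\sym X}^2 + \hsnorm{\skew X}^2$, the identities $\sym\id = \id$, $\skew\id = 0$, $\langle \sym X,\id\rangle = \tr{X}$, and $\hsnorm{\mrot^TF}^2 = \hsnorm{F}^2$ for $\mrot \in \SO(n)$, a direct algebraic expansion yields
\begin{equation*}
  \wmm(\mrot\,;F) \;=\; \mu\,(\hsnorm{F}^2 + n) \,-\, (\mu - \mu_c)\,\hsnorm{\skew(\mrot^TF)}^2 \,-\, 2\mu\,\tr{\mrot^TF},
\end{equation*}
and in particular $W_{1,1}(\mrot\,;F) = \hsnorm{\mrot^TF - \id}^2 \sim -\tr{\mrot^TF}$.

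For the classical parameter range $\mu_c \geq \mu > 0$ I would instead work with the non-negative split $\wmm = \mu\,\hsnorm{\mrot^TF - \id}^2 + (\mu_c - \mu)\,\hsnorm{\skew(\mrot^TF - \id)}^2$. Grioli's theorem (as stated in Theorem~\ref{theo:intro:grioli}) identifies $\polar(F)$ as the \emph{unique} minimizer of the first summand; since $\polar(F)^TF = U(F) \in \Psym(n)$ is symmetric, the second summand attains its global minimum $0$ at $\polar(F)$ as well and so is also minimized there. Consequently $\argmin{\mrot\,\in\,\SO(n)}{\wmm(\mrot\,;F)} = \{\polar(F)\} = \argmin{\mrot\,\in\,\SO(n)}{W_{1,1}(\mrot\,;F)}$, which is exactly $\wmm \sim W_{1,1}$.

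For the non-classical parameter range $\mu > \mu_c \geq 0$ I would apply the same canonical expansion to $\wsym(\mrot\,;\widetilde{F}_{\mu,\mu_c})$, exploiting $\mrot^T\widetilde{F}_{\mu,\mu_c} = \lambda_{\mu,\mu_c}^{-1}\mrot^TF$, to obtain (modulo $\mrot$-independent constants) $\wsym(\mrot\,;\widetilde{F}_{\mu,\mu_c}) \sim -\lambda_{\mu,\mu_c}^{-2}\hsnorm{\skew(\mrot^TF)}^2 - 2\lambda_{\mu,\mu_c}^{-1}\tr{\mrot^TF}$. Dividing the $\wmm$-expansion by $\mu - \mu_c > 0$ and this $\wsym$-expression by $\lambda_{\mu,\mu_c}^{-2} > 0$ — operations which preserve $\sim$ — and invoking the identity $\sradmm = 2\lambda_{\mu,\mu_c} = 2\mu/(\mu-\mu_c)$ built into Definition~\ref{defi:reduction:rescaling}, both sides reduce to the common canonical form $-\hsnorm{\skew(\mrot^TF)}^2 - \sradmm\,\tr{\mrot^TF}$, yielding $\wmm(\cdot\,;F) \sim \wsym(\cdot\,;\widetilde{F}_{\mu,\mu_c})$. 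The one genuine subtlety — and the reason for the specific formula $\lambda_{\mu,\mu_c} = \mu/(\mu - \mu_c)$ — is that the skew-norm scales quadratically in $F$ while the trace scales linearly, so the single rescaling $F \mapsto \lambda^{-1}F$ available in the target energy $\wsym$ must be tuned precisely so that the ratio of the two coefficients is forced to coincide with $\sradmm$.
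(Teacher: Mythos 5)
Your proof is correct. The classical part is identical to the paper's argument: the same split $\wmm = \mu\,\hsnorm{\mrot^TF - \id}^2 + (\mu_c - \mu)\,\hsnorm{\skew(\mrot^TF - \id)}^2$, with the observation that both nonnegative summands are minimized simultaneously at $\polar(F)$.

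For the non-classical part your route is genuinely more economical than the paper's, though it rests on the same algebraic fact. The paper expands $\wmm$ in terms of $\tr{(\mrot^TF)^2}$ and $\tr{\mrot^TF}$, rescales by $\rho_{\mu,\mu_c}/\mu$, completes the square to obtain $\tr{\left(\mrot^TF - \rho_{\mu,\mu_c}\id\right)^2}$ (which is where the ``singular radius'' earns its name), and then threads a chain $\wmm \sim \wmmtilde \sim \wsymtilde(\cdot\,;\widetilde{F}_{\mu,\mu_c}) \sim \wsym(\cdot\,;\widetilde{F}_{\mu,\mu_c})$ through a sequence of auxiliary constants $c^{(1)},\ldots,c^{(4)}$. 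You instead expand in the variables $\hsnorm{\skew(\mrot^TF)}^2$ and $\tr{\mrot^TF}$, which is equivalent via the identity $\tr{X^2} = \hsnorm{X}^2 - 2\,\hsnorm{\skew X}^2$, and match the two affine forms directly after a positive rescaling. Your observation that the entire mechanism reduces to forcing the \emph{ratio} of the quadratic-in-$F$ coefficient to the linear-in-$F$ coefficient to agree — because a single rescaling $F \mapsto \lambda^{-1}F$ is the only knob available in $\wsym$ — makes the role of $\lambda_{\mu,\mu_c} = \mu/(\mu - \mu_c)$ more transparent than the paper's completed-square presentation, at the cost of losing the geometric picture of $\wsym$ as a squared distance to the scaled identity $\rho_{\mu,\mu_c}\id$.
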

We have to defer the proof for a bit, since we need some more preparations.

In particular, the foregoing~\leref{lem:parameter_reduction} implies
that we can focus on the classical and non-classical limit cases.
Once these are solved, the solutions for general values of $\mu$
and $\mu_c$ can be recovered. Note that for the non-classical
limit case $(\mu,\mu_c) = (1,0)$, the Cosserat shear-stretch
energy defined in~\eqref{eq:intro:weighted} which is subject to
minimization, takes the explicit form
\begin{equation}
\begin{aligned}
  \wsym \colon \SO(n) \times \GL^+(n)\to \RPosZ\text{,}\quad
  \wsym(\mrot\,;F) = \norm{\sym(\mrot^TF - \id)}^2.
\end{aligned}
\end{equation}

\begin{rem}[Immediate generalizations]
  The parameter reduction in~\leref{lem:parameter_reduction} and the optimality
  results for $n = 2$ in~\secref{sec:minimization} both extend to bijective
  parameter transformations $\tau: \GL^+(n) \to \GL^+(n)$. For example,
  $\tau(F) \eqdef \cof(F)^T$, $\cof(X) \eqdef \det{X}\,X^{-1}$ arises in the
  cofactor shear energy
  \begin{align*}
    W^{\rm cof}_{\mu,\mu_c}(\mrot\,;F) &:\SO(n) \times \GL^+(n) \to \RPosZ\;,\notag\\
    W^{\rm cof}_{\mu,\mu_c}(\mrot\,;F) &\eqdef
    \mu\;\norm{\sym(\cof(\mrot^TF) - \id)}^2 +
    \mu_c\;\norm{\skew(\cof(\mrot^TF) - \id)}^2\\
    &\phantom{:}= \mu\;\norm{\sym(\mrot^T\tau(F) - \id)}^2 +
    \mu_c\;\norm{\skew(\mrot^T\tau(F) - \id)}^2\;.
  \end{align*}
  Our results extend to this case in the natural way, i.e.,
  the optimal rotations are $(\rpolar_{\mu, \mu_c} \circ\, \tau)(F)$.
\end{rem}

In what follows, we take a detailed look at the parameters $\mu$ and
$\mu_c$ and the role they play in the weighted minimization
in~\probref{intro:prob:weighted}.

\subsection{Reduction of the classical parameter range: $\mu_c \geq \mu > 0$}
For the classical parameter range $\mu_c \geq \mu > 0$, we essentially rediscover
Grioli's theorem (see~\theref{theo:intro:grioli}) on the optimality of the polar
factor $\polar(F)$, since the minimization problem reduces to the limit
case $(\mu,\mu_c) = (1,1)$.

\begin{proof}[Proof of~\leref{lem:parameter_reduction} (first part)]
For $\mu_c \geq \mu > 0$, we have $\mu_c - \mu \geq 0$ which gives us the
following lower bound
\begin{align*}
\wmm(\mrot\,;F) \,\eqdef\,& \mu\, \norm{\sym(\mrot^TF) - \id}^2 +
\mu_c\,\norm{\skew(\mrot^TF - \id)}^2\notag\\
\,=\,& \mu\,\norm{\mrot^TF - \id}^2 + (\mu_c - \mu)\,\norm{\skew(\mrot^TF - \id)}^2\\
\,\geq\,& \mu\,\norm{\mrot^TF - \id}^2 = \mu\,W_{1,1}(\mrot\,;F)\,.
\end{align*}
Since $\wmm(\polar(F)\,;F) = \mu\,W_{1,1}(\polar(F)\,;F)$, it
follows that $\wmm \sim W_{1,1}$ for the entire classical parameter
range.\qedhere
\end{proof}

In passing, we have proved the following immediate generalization to
Grioli's theorem.
\begin{cor}
  \label{cor:reduction:grioli}
  Let $\mu_c \geq \mu > 0$ and $F \in \GL^+(n)$, then
\begin{equation*}
  \argmin{\mrot\,\in\,\SO(n)}{\wmm(R\,;F)}
  = \argmin{\mrot\,\in\,\SO(n)}{\left\{\mu\, \norm{\sym(\mrot^TF) - \id}^2 +
\mu_c\,\norm{\skew(\mrot^TF - \id)}^2\right\}}
  = \lbrace{\polar(F)}\rbrace\;\text{,}
\end{equation*}
i.e., the polar factor $\polar(F)$ is the unique global minimizer.\qed
\end{cor}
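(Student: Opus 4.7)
The statement is an immediate corollary of what has just been proven together with Grioli's theorem, so my plan is essentially a repackaging rather than a new argument. First, I would invoke the first part of \leref{lem:parameter_reduction}, which for the classical range $\mu_c \geq \mu > 0$ yields $\wmm(\mrot\,;F) \sim W_{1,1}(\mrot\,;F)$ on $\SO(n)$. By the defining property of the equivalence relation $\sim$, the two energies share the same $\argmin$ set over $\SO(n)$. Second, I would apply Grioli's theorem (\theref{theo:intro:grioli}) to $W_{1,1}(\mrot\,;F) = \hsnorm{\mrot^TF - \id}^2$ to identify this common $\argmin$ set as the singleton $\{\polar(F)\}$.

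For the uniqueness claim, I would make the chain of inequalities from the lemma's proof explicit. The pointwise lower bound $\wmm(\mrot\,;F) \geq \mu\,W_{1,1}(\mrot\,;F)$ is attained with equality at $\mrot = \polar(F)$, since $\polar(F)^TF = U \in \Psym(n)$ forces $\skew(\polar(F)^TF - \id) = 0$, so the extra term $(\mu_c - \mu)\,\hsnorm{\skew(\mrot^TF - \id)}^2$ vanishes there. For any $\mrot \neq \polar(F)$, Grioli's uniqueness gives the strict inequality $W_{1,1}(\mrot\,;F) > W_{1,1}(\polar(F)\,;F)$, and chaining with the lower bound and the equality at the polar factor yields $\wmm(\mrot\,;F) \geq \mu\,W_{1,1}(\mrot\,;F) > \mu\,W_{1,1}(\polar(F)\,;F) = \wmm(\polar(F)\,;F)$, which establishes uniqueness.

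There is no real obstacle here: the corollary is effectively a rephrasing of the first half of \leref{lem:parameter_reduction} once Grioli's theorem is applied to the representative case $(\mu,\mu_c) = (1,1)$. The only subtlety worth emphasizing in the write-up is that $\sim$ only transports the $\argmin$ set, so the strict-inequality/uniqueness conclusion must be read off from the explicit bound in the lemma's proof rather than from $\sim$ itself.
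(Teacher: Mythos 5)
Your proposal is correct and follows essentially the same route as the paper: the corollary is obtained "in passing" from the first part of Lemma \ref{lem:parameter_reduction}, i.e., the lower bound $\wmm(\mrot\,;F) \geq \mu\,W_{1,1}(\mrot\,;F)$ with equality at $\polar(F)$, combined with Grioli's theorem for $W_{1,1}$. Your explicit strict-inequality chain for uniqueness is a fine (if slightly redundant) elaboration, since the equality of the two $\argminmathop$ sets already transports the singleton $\{\polar(F)\}$.
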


\subsection{Reduction of the non-classical parameter range: $\mu > \mu_c \geq 0$}

A sparking idea which enters the proof of the second part
of~\leref{lem:parameter_reduction} is due to
M. Hofmann-Kliemt (then at TU Darmstadt~\cite{Hofmann-Kliemt:2007:PHD})
who contributed to the study of the influence of the
parameters $\mu$ and $\mu_c$ by spotting the applicability of the
following elementary identity~\cite{Hofmann-Kliemt:2007:PRL}:
\begin{lem}[Expanding the square]
  \label{lem:tr_complete_square}
  Let $\mrot\,\in\,\SO(n)$ and $F \in \GL^+(n)$, then the following
  identity holds:
  \begin{equation}
    \tr{\left(\mrot^TF - \rho_{\mu,\mu_c}\id\right)^2} = \tr{(\mrot^TF)^2} - 2 \rho_{\mu,\mu_c} \tr{\mrot^TF} + \rho_{\mu,\mu_c}^2\tr{\id}.
  \end{equation}
  \textit{Proof.}
    \vspace{-\baselineskip}
    \begin{align}
      \tr{\left(\mrot^TF - \rho_{\mu,\mu_c}\id\right)^2}
      &= \tr{\left(\mrot^TF - \rho_{\mu,\mu_c}\id\right)\left(\mrot^TF - \rho_{\mu,\mu_c}\id\right)}\notag\\
      &= \tr{(\mrot^TF)^2 - 2\,\rho_{\mu,\mu_c}\;\mrot^TF + \rho_{\mu,\mu_c}^2\,\id}.
    \end{align}
    The claim follows by linearity of the trace operator.\qedhere
\end{lem}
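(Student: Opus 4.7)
The plan is an elementary direct computation in two steps: first expand the square of the matrix expression on the left, then apply the linearity of $\tr{\cdot}$.

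For the expansion, the key observation is that $\rho_{\mu,\mu_c}\id$ is a scalar multiple of the identity and hence commutes with every matrix. Setting $A \eqdef \mrot^TF$, this permits the standard binomial identity $(A - \rho_{\mu,\mu_c}\id)^2 = A^2 - 2\rho_{\mu,\mu_c}\,A + \rho_{\mu,\mu_c}^2\,\id$ to be used exactly as in the scalar case: the two mixed products $A(\rho_{\mu,\mu_c}\id)$ and $(\rho_{\mu,\mu_c}\id)A$ both collapse to $\rho_{\mu,\mu_c}\,A$, and $\id^2 = \id$.

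Next, I would take the trace of both sides and distribute it across the sum using linearity and scalar homogeneity, which directly produces the three terms on the right-hand side of the claim.

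There is no real obstacle here. Neither $\mrot \in \SO(n)$ nor $F \in \GL^+(n)$ actually enters the argument; the identity is valid for any $A \in \R^{n\times n}$ and any scalar $\rho \in \R$. The single moment deserving a word of justification is the commutativity step that allows the two mixed terms to be combined into a single $2\rho_{\mu,\mu_c}\,A$, and this is immediate because $\rho_{\mu,\mu_c}\id$ lies in the center of the matrix algebra. The lemma is thus purely algebraic bookkeeping in preparation for rewriting $\wmm$ in terms of powers of $\tr{\mrot^TF}$.
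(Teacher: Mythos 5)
Your proof is correct and follows essentially the same route as the paper: expand the square of the matrix expression, then apply linearity of the trace. You make the commutativity of $\rho_{\mu,\mu_c}\id$ with $\mrot^TF$ explicit, which the paper leaves implicit in combining the two cross terms, but this is the same argument with one more sentence of justification.
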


This leads now to a reduction of the minimization of the energy
$\wmm$ to $\wsym$ for the non-classical parameter range.
The main ingredient is the rescaling of the parameter space $\GL^+(n)$
in~\deref{defi:reduction:rescaling}.

\begin{proof}[Proof of~\leref{lem:parameter_reduction} (second part)]
We proceed by successive term expansion, gathering the contributions which
are constant with respect to $\mrot$ at each step. To this end, we split
\begin{equation}
  \wmm(\mrot\,;F) = \underbrace{\mu\; \norm{\sym(\mrot^TF - \id)}^2}_{=:\;\textrm{I}}
  \;+\; \underbrace{\mu_c\;\norm{\skew(\mrot^TF - \id)}^2}_{=:\;\textrm{II}}
\end{equation}
and simplify the summands I and II separately. For the first term, we get
\begin{align}
    \textrm{I} &= \mu\,\norm{\sym{(\mrot^TF - \id)}}^2
      = \frac{\mu}{2} \left( \norm{F}^2 + 2\norm{\id}^2 +
    \tr{(\mrot^TF)^2} -4\,\tr{\mrot^TF} \right)\;.
\intertext{Similarly, for the second term}
  \textrm{II} &= \mu_c\,\norm{\skew(\mrot^TF)}^2
     = \frac{\mu_c}{4}\;\scalprod{\mrot^TF - F^T\mrot}{\mrot^TF -
     F^T\mrot}
     = \frac{\mu_c}{2} \left( \norm{F}^2 - \tr{(\mrot^TF)^2} \right)
\end{align}
is obtained. Summation of I and II while shifting all terms constant in
$\mrot$ to the right yields
\begin{equation}
  \wmm(\mrot\,;F) = \textrm{I} + \textrm{II}
  = \frac{\mu - \mu_c}{2}\,\tr{(\mrot^TF)^2} -
  2\mu\;\tr{\mrot^TF} + \frac{\mu + \mu_c}{2}\,\norm{F}^2 + \mu\,\norm{\id}^2.\label{eq:reduction:wmm_expanded}
\end{equation}
We shall collect all terms which are constant with respect to
$\mrot$ in a sequence
of suitable constants, starting with $c^{(1)}_{\mu,\mu_c}(F) \eqdef \frac{\mu + \mu_c}{2} \norm{F}^2 + \mu\norm{\id}^2$. This yields the expression
\begin{equation}
  \wmm(\mrot\,;F) = \frac{\mu - \mu_c}{2}\,\tr{(\mrot^TF)^2} - 2\mu\;\tr{\mrot^TF} + c^{(1)}_{\mu,\mu_c}(F).
\end{equation}

Introducing the singular radius $\rho_{\mu,\mu_c}$ from~\deref{defi:reduction:lambda}, we can write the preceding equation as follows
\begin{equation}
  \label{WMMFR_ND_CompleteTheSquare_WithSingularRadius}
  \wmm(\mrot\,;F) = \frac{\mu}{\rho_{\mu,\mu_c}}\,\tr{(\mrot^TF)^2}
  - 2\mu\;\tr{\mrot^TF} + c^{(1)}_{\mu,\mu_c}(F)\;,
\end{equation}
which inspires us to define a {\bf rescaled energy}
\begin{equation}
  \label{WMMFRTILDE_ND_Definition}
  \widetilde{W}_{\mu,\mu_c}(\mrot\,;F) \colon \SO(n) \times \GL^+(n)
  \to \RPosZ,\quad \widetilde{W}_{\mu,\mu_c}(\mrot\,;F) \eqdef
  \frac{\rho_{\mu,\mu_c}}{\mu}\;\wmm(\mrot\,;F)\;.
\end{equation}
We now expand using
\eqref{WMMFR_ND_CompleteTheSquare_WithSingularRadius} to get
\begin{align}
  \widetilde{W}_{\mu,\mu_c}(\mrot\,;F)
  &= \frac{\rho_{\mu,\mu_c}}{\mu}\;\wmm(\mrot\,;F)
  = \frac{\rho_{\mu,\mu_c}}{\mu}\; \left(\frac{\mu}{\rho_{\mu,\mu_c}}
    \tr{(\mrot^TF)^2} - 2\mu\;\tr{\mrot^TF} +
    c^{(1)}_{\mu,\mu_c}(F)\right)\notag\\
  &= \tr{(\mrot^TF)^2} -
  2\rho_{\mu,\mu_c}\;\tr{\mrot^TF} + c^{(2)}_{\mu,\mu_c}(F)\;,
\end{align}
with $c^{(2)}_{\mu,\mu_c}(F) \eqdef \frac{\rho_{\mu,\mu_c}}{\mu}\,c^{(1)}_{\mu,\mu_c}(F)$, and observe that $\wmm$ and the rescaled energy $\wmmtilde$ share the
same local and global extrema in $\SO(n)$. This gives us $\wmmtilde \sim \wmm$
which also holds for the particular choice of parameters $\mu = 1$ and
$\mu_c = 0$, i.e., $\widetilde{W}_{1,0} \sim \wsym$. For the latter specific
choice of parameters, the rescaled energy takes the form
\begin{equation}
\widetilde{W}_{1,0}(\mrot\,;F) = \tr{(\mrot^TF)^2} -
              2\rho_{1, 0}\,\tr{\mrot^TF} +
              c^{(2)}_{1,0}(F).
\end{equation}
The next step of the proof is to show an affine relation between
$\wmmtilde$ and $\widetilde{W}_{1,0}$. With~\leref{lem:tr_complete_square},
we proceed by completing the square to get
\begin{align}
  \widetilde{W}_{\mu,\mu_c}(\mrot\,;F) &= \tr{(\mrot^TF)^2}
                  - 2 \rho_{\mu,\mu_c}\,\tr{\mrot^TF}
                  + c^{(2)}_{\mu,\mu_c}(F)\notag \\
              &= \tr{(\mrot^TF)^2}
                  - 2 \rho_{\mu,\mu_c}\,\tr{\mrot^TF} +
                 \rho_{\mu,\mu_c}^2 - \rho_{\mu,\mu_c}^2
                 + c^{(2)}_{\mu,\mu_c}(F)\notag \\
              &= \tr{(\mrot^TF - \rho_{\mu,\mu_c}\id)^2}
                 + c^{(3)}_{\mu,\mu_c}(F)\;,
\end{align}
where $c^{(3)}_{\mu,\mu_c}(F) \eqdef c^{(2)}_{\mu,\mu_c}(F) - \rho_{\mu,\mu_c}^2$.
Inserting $\mu = 1$ and $\mu_c = 0$, we obtain the special case
\begin{align}
\label{WSYMTILDE_ND_COMPLETED_SQUARE}
  \widetilde{W}_{1,0}(\mrot\,;F)
  = \tr{(\mrot^TF - \rho_{1,0}\id)^2} + c^{(3)}_{1,0}(F)\;.
\end{align}

We can now reveal the connection between the minimization problem
with parameters $\mu > \mu_c \geq 0$ and the non-classical limit
case $(\mu,\mu_c) = (1,0)$. Note first that
\begin{align}
  \widetilde{W}_{\mu,\mu_c}(\mrot\,;F) &= \tr{\left(\mrot^TF -
    \rho_{\mu,\mu_c}\id\right)^2} +
  c^{(3)}_{\mu,\mu_c}(F)
  = \tr{\left(\mrot^TF -
    \frac{\rho_{\mu,\mu_c}}{\rho_{1,0}}\;\rho_{1,0}\;\id\right)^2} +
  c^{(3)}_{\mu,\mu_c}(F)\notag\\
  &= \lambda_{\mu,\mu_c}^2\,\tr{\left(\mrot^T\widetilde{F}_{\mu,\mu_c} - \rho_{1,0}\;\id\right)^2} + c^{(3)}_{\mu,\mu_c}(F)\;.\label{eq:reduction:wmmtildeconnection}
\end{align}
In the last equation, we easily discover the trace term of equation
\eqref{WSYMTILDE_ND_COMPLETED_SQUARE} with one essential change: $F$ was replaced
by $\widetilde{F}_{\mu,\mu_c} \eqdef \lambda^{-1}_{\mu,\mu_c}F$.
We now solve~\eqref{WSYMTILDE_ND_COMPLETED_SQUARE} for the trace term,
and insert $\widetilde{F}_{\mu,\mu_c}$ for the parameter $F$. This gives
\begin{equation}
  \tr{(\mrot^T\widetilde{F}_{\mu,\mu_c} - \rho_{1,0}\id)^2}
  = \widetilde{W}_{1,0}(\mrot\,;\widetilde{F}_{\mu,\mu_c}) - c^{(3)}_{1,0}(\widetilde{F}_{\mu,\mu_c})\;.
\end{equation}
Next, we substitute the trace term in~\eqref{eq:reduction:wmmtildeconnection}
by its expression in terms of $\wsymtilde$ and finally obtain
\begin{align}
  \label{WMM_TO_WSYM_TRAFO}
  \widetilde{W}_{\mu,\mu_c}(\mrot\,;F)
  &= \lambda_{\mu,\mu_c}^2\;\left(\wsymtilde(\mrot\,;\widetilde{F}_{\mu,\mu_c})
    - c^{(3)}_{1,0}(\widetilde{F}_{\mu,\mu_c})\right) + c^{(3)}_{\mu,\mu_c}(F)\notag\\
  &= \lambda_{\mu,\mu_c}^2\;\wsymtilde(\mrot\,;\widetilde{F}_{\mu,\mu_c})
  + c^{(4)}_{\mu,\mu_c}(F)\;,
\end{align}
with $c^{(4)}_{\mu,\mu_c}(F) \eqdef c^{(3)}_{\mu,\mu_c}(F) - \lambda_{\mu,\mu_c}^2\,c^{(3)}_{1,0}(\widetilde{F}_{\mu,\mu_c})$. This establishes the missing link
$\wmmtilde(\mrot\,;F) \sim \wsymtilde(\mrot\,;\widetilde{F}_{\mu,\mu_c})$,
since $\lambda_{\mu,\mu_c}^2 > 0$ and the final constant $c^{(4)}_{\mu,\mu_c}(F)$
depends only on $F$. With this, the chain
\begin{equation}
\wmm(\mrot\,;F) \sim
\wmmtilde(\mrot\,;F) \sim
\wsymtilde(\mrot\,;\widetilde{F}_{\mu,\mu_c}) \sim
\wsym(\mrot\,;\widetilde{F}_{\mu,\mu_c})
\end{equation}
is now complete. All four energies give rise to the same
energy-minimizing rotations in $\SO(n)$.\qedhere
\end{proof}%

Once the optimal energy-minimizing rotations for $\wsym$ are available,
the optimal rotations for $\wmm$ with general weights $\mu > \mu_c \geq 0$
can be directly inferred by a substitution of $F$ with
$\widetilde{F}_{\mu,\mu_c}$. This procedure is detailed
in~\secref{subsec:minimization:reconstruction}. In this
sense, surprisingly, the non-classical case $\mu > \mu_c > 0$, i.e.,
with \emph{strictly positive} Cosserat couple modulus $\mu_c$,
is completely governed by the case with zero Cosserat couple
modulus $\mu_c = 0$ which is highly interesting in view
of~\cite{Neff_ZAMM05}!

%
%
%
%

%
%
%
%

%
%
%
%

%
%
%
%
%
%
%
%
%
%

%
%
% end input /Volumes/HDD/Workspace/papers/2015/Cosserat2D/src/reduction.tex
 %
\countres
\makeatletter{}%
% start input /Volumes/HDD/Workspace/papers/2015/Cosserat2D/src/minimization.tex
\section{Optimal rotations for the Cosserat shear-stretch energy}
\label{sec:minimization}
In this section, we compute explicit representations of optimal
planar rotations for the Cosserat shear-stretch energy, i.e.,
we focus on dimension $n = 2$. The parameter reduction strategy
in~\leref{lem:parameter_reduction} allows us to concentrate
our efforts towards the construction of explicit solutions
to~\probref{intro:prob:planar} on two representative pairs of parameter
values $\mu$ and $\mu_c$. The classical
regime is characterized by the limit case $(\mu,\mu_c) = (1,1)$ and the unique
minimizer is given by the polar factor $\polar(F)$ for any dimension
$n \geq 2$, see~\coref{cor:reduction:grioli}. The non-classical case represented
by $(\mu,\mu_c) = (1,0)$ turns out to be much more interesting and we compute
all global non-classical minimizers $\rpolar_{1,0}(F)$ for $n = 2$. This is the
main contribution of this section. Furthermore, we derive the associated
reduced energy levels $W^{\rm red}_{1,1}(F)$ and $W^{\rm red}_{1,0}(F)$
which are realized by the corresponding optimal Cosserat microrotations.
Finally, we reconstruct the minimizing rotation angles for general values
of $\mu$ and $\mu_c$ from the classical and non-classical limit cases.

\subsection{Explicit solution for the classical parameter range: $\mu_c \geq \mu > 0$}
By~\coref{cor:reduction:grioli} the polar factor $\polar(F)$ is uniquely
optimal for the classical parameter range in any dimension $n \geq 2$.
Let us give an explicit representation for $n = 2$ in terms of
$\palpha \in (-\pi,\pi]$. In view of the parameter reduction, distilled
in~\leref{lem:parameter_reduction}, it suffices to compute the set of
optimal rotation angles for the representative limit case
$(\mu,\mu_c) = (1,1)$.

Thus, to obtain an explicit representation of $\palpha \in (-\pi,\pi]$ which
characterizes the polar factor $\polar(F)$ in dimension $n = 2$, we
consider
\begin{equation}
  \argmin{\malpha\;\in\;[-\pi,\pi]} W_{1,1}(R(\malpha)\,;F)
  = \argmin{\malpha\;\in\;[-\pi,\pi]}{\hsnorm{\left[\begin{pmatrix}
      \cos \malpha & -\sin \malpha\\
      \sin \malpha & \cos \malpha
\end{pmatrix}^T
\begin{pmatrix}
    F_{11} & F_{12}\\
    F_{21} & F_{22}\\
\end{pmatrix}
-\begin{pmatrix}
 1 & 0\\
 0 & 1
\end{pmatrix}\right]}^2}\,.
\end{equation}
Let us introduce the rotation
$J \eqdef \begin{pmatrix} 0 & -1\\ 1 & 0 \end{pmatrix} \in \SO(2)$.
Its application to a vector $v \in \R^2$ corresponds to multiplication
with the imaginary unit $i \in \C$. In what follows, the quantities
$\tr{F} = F_{11} + F_{22}$ and $\tr{JF} = - F_{21} + F_{12}$
play a particular role and we note the identity
\begin{equation}
\tr{F}^2 + \tr{JF}^2 = \hsnorm{F}^2 + 2\,\det{F} = \tr{U}^2\;.
\end{equation}

The reduced energy $W_{1,1}^{\rm red}(F) \eqdef \min_{R\in\SO(n)} W_{1,1}(R\,;F)$
realized by the polar factor $\polar(F)$ can be shown to be the euclidean
distance of an arbitrary $F$ in $\R^{n\times n}$ to $\SO(n)$. For $n = 2$, we
obtain

\begin{theo}[Euclidean distance to planar rotations]
  Let $F \in \GL^+(2)$, then
  \begin{equation}
    W_{1,1}^{\rm red}(F) = \dist^2(F,\SO(2)) = \hsnorm{U - \id}^2
    = \hsnorm{F}^2 - 2\,\sqrt{\hsnorm{F}^2 + 2\,\det{F}} + 2\;.
  \end{equation}
  The unique optimal rotation angle realizing this minimial
  energy level satisfies the equation
  \begin{equation}
    \begin{pmatrix}
      \sin\palpha\\
      \cos\palpha
    \end{pmatrix}
    =\frac{1}{\tr{U}}
    \begin{pmatrix}
      -\tr{JF} \\
      \tr{F}
    \end{pmatrix}\;.
  \end{equation}
  In particular, we have
  $\palpha(F) = \arccos\left(\frac{\tr{F}}{\tr{U}}\right)$.
\end{theo}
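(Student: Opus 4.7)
The plan is to leverage the parameter reduction (first part of Lemma~\ref{lem:parameter_reduction}) together with Corollary~\ref{cor:reduction:grioli}, which already identify $\polar(F)$ as the unique minimizer of $W_{1,1}(R\,;F)$. Thus the reduced energy equals $\|U-\id\|^2$, and the first step is to expand this via the Frobenius inner product: since $\polar$ is orthogonal, $\|U\|^2 = \|\polar U\|^2 = \|F\|^2$, giving
\begin{equation*}
  W_{1,1}^{\rm red}(F) = \|U\|^2 - 2\,\tr U + \|\id\|^2 = \|F\|^2 - 2\,\tr U + 2\,.
\end{equation*}

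Next, I would compute $\tr U$ explicitly in dimension $n=2$ using the Cayley--Hamilton theorem applied to $U \in \Psym(2)$: we have $U^2 - (\tr U)\,U + (\det U)\,\id = 0$, and taking the trace yields $(\tr U)^2 = \tr(U^2) + 2\,\det U$. Because $U^2 = F^TF$, we get $\tr(U^2) = \|F\|^2$, and $\det U = \sqrt{\det(F^TF)} = \det F$ (using $F \in \GL^+(2)$). Since $U$ is positive definite, $\tr U > 0$, so the physically admissible root is
\begin{equation*}
  \tr U = \sqrt{\|F\|^2 + 2\,\det F}\,,
\end{equation*}
which, substituted above, delivers the distance formula. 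Simultaneously this identifies the quantity $\tr{F}^2 + \tr{JF}^2 = \|F\|^2 + 2\det F = (\tr U)^2$ announced in the excerpt.

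For the explicit rotation angle, I would exploit that on $\SO(2)$, $\|\mrot^TF\|^2 = \|F\|^2$ is independent of $\mrot$, so that
\begin{equation*}
  W_{1,1}(\mrot(\malpha)\,;F) = \|F\|^2 - 2\,\tr\!\big(\mrot(\malpha)^T F\big) + 2\,,
\end{equation*}
and minimization becomes maximization of $\malpha \mapsto \tr(\mrot(\malpha)^T F)$. A direct computation using the standard parametrisation of $\mrot(\malpha)$ yields
\begin{equation*}
  \tr\!\big(\mrot(\malpha)^T F\big) = \cos\malpha \,\tr F - \sin\malpha \,\tr{JF}\,,
\end{equation*}
which is of the form $a\cos\malpha + b\sin\malpha$ with unique maximum $\sqrt{a^2+b^2} = \tr U$ attained at the unique $\palpha \in (-\pi,\pi]$ satisfying $\cos\palpha = \tr F/\tr U$ and $\sin\palpha = -\tr{JF}/\tr U$. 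Consistency of this maximum value with $\tr U$ gives an independent cross-check of the distance formula.

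The only real subtlety is the bookkeeping that ties $\palpha$ to the principal branch: $\cos\palpha = \tr F/\tr U$ determines $\palpha$ only up to sign, and the sign is fixed by $\sin\palpha = -\tr{JF}/\tr U$. Since $\tr U > 0$ throughout $\GL^+(2)$, the expression $\palpha = \arccos(\tr F / \tr U)$ is well-defined on $[0,\pi]$ and is then signed according to $\tr{JF}$. I expect no real obstacle beyond this sign convention; the proof is otherwise a short assembly of Grioli's theorem, Cayley--Hamilton in 2D, and the elementary extremisation of a single sinusoid.
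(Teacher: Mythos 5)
The paper does not actually prove this theorem in situ: the listed ``proof'' is a single citation to an external reference (Martin/Neff, Appendix A2.1). Your proposal therefore supplies something the paper itself omits, and it is correct. The chain of reductions is sound: Grioli's theorem (via \coref{cor:reduction:grioli}) hands you the minimizer $\polar(F)$, so that $W_{1,1}^{\rm red}(F) = \|U-\id\|^2$; expanding the square and using $\|U\|^2 = \|F\|^2$ (orthogonal invariance of the Frobenius norm) reduces everything to $\tr U$; and the trace of Cayley--Hamilton for a $2\times 2$ matrix, $\tr{X}^2 = \tr{X^2} + 2\det{X}$, is precisely the identity the paper records in its own appendix as \eqref{eq:square_trace_id}, here applied to $X=U$ together with $\tr{U^2}=\|F\|^2$ and $\det U = \det F$. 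For the angle, the observation that $\|\mrot^TF\|^2 = \|F\|^2$ turns the minimization into the maximization of the single sinusoid $\cos\alpha\,\tr F - \sin\alpha\,\tr{JF}$, whose unique maximizer gives exactly the displayed vector equation; and the attained maximum $\sqrt{\tr{F}^2+\tr{JF}^2}=\tr U$ is a pleasant internal consistency check, as you note.

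Your remark about the sign convention is apt and in fact slightly sharper than the theorem's phrasing: $\arccos(\tr F/\tr U)$ alone only fixes $|\palpha|$, while the full vector equation (via the sign of $-\tr{JF}$) pins down the signed angle. This is consistent with the paper's own footnoted convention that $\pi$ is preferred over $-\pi$, and with the fact that the ``In particular'' clause follows the complete two-component equation. In short, your proof is a short, self-contained, and correct alternative to the paper's citation-only proof, using only Grioli's theorem, the 2D Cayley--Hamilton identity, and elementary trigonometry.
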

\begin{proof}
  See~\cite{Martin:2015:RCPC}[Appendix A2.1].\qedhere
\end{proof}

\begin{cor}[Explicit formula for $\polar(F)$]
  \label{cor:polar_planar}
  Let $F \in \GL^+(2)$, then the polar factor $\polar(F)$ has the
  explicit representation
  \begin{equation}
    \polar(F) =
    R(\palpha) \eqdef \begin{pmatrix}
      \cos\palpha & -\sin\palpha\\
      \sin\palpha &  \cos\palpha
    \end{pmatrix}
    = \frac{1}{\tr{U}}
    \begin{pmatrix}
      \phantom{-}\tr{F}  & \tr{JF}\\
      -\tr{JF} & \tr{F}
    \end{pmatrix}\;.
  \end{equation}
\end{cor}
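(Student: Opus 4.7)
The plan is to read off the corollary as a direct consequence of the preceding theorem, which already identifies the minimizing angle $\palpha$ via
\[
  \begin{pmatrix}\sin\palpha\\ \cos\palpha\end{pmatrix}
  = \frac{1}{\tr{U}}\begin{pmatrix}-\tr{JF}\\ \tr{F}\end{pmatrix}.
\]
By Grioli's theorem (specialized to $n=2$ via Corollary~\ref{cor:reduction:grioli}) the unique global minimizer of $W_{1,1}(\mrot\,;F)$ over $\SO(2)$ is precisely $\polar(F)$, so the rotation angle $\palpha$ produced by the theorem must be the angle of the polar factor.

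From there the proof is a one-line substitution: plug $\cos\palpha = \tr{F}/\tr{U}$ and $\sin\palpha = -\tr{JF}/\tr{U}$ into the standard matrix parametrization
\[
  R(\palpha) = \begin{pmatrix}\cos\palpha & -\sin\palpha\\ \sin\palpha & \cos\palpha\end{pmatrix}
\]
and collect the common factor $1/\tr{U}$. This yields exactly the claimed expression
\[
  \polar(F) = \frac{1}{\tr{U}}\begin{pmatrix}\phantom{-}\tr{F} & \tr{JF}\\ -\tr{JF} & \tr{F}\end{pmatrix}.
\]

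There is no genuine obstacle here; the work has been done in the previous theorem. The only thing worth double-checking is the sign conventions, namely that $J = \begin{pmatrix}0 & -1\\ 1 & 0\end{pmatrix}$ gives $\tr{JF} = -F_{21}+F_{12}$ so that the off-diagonal entries of the claimed matrix come out consistent with $-\sin\palpha$ in the $(1,2)$-slot and $+\sin\palpha$ in the $(2,1)$-slot. As a sanity check one can also verify directly that the matrix on the right-hand side lies in $\SO(2)$: its determinant equals $(\tr{F}^2 + \tr{JF}^2)/\tr{U}^2$, and by the identity $\tr{F}^2 + \tr{JF}^2 = \hsnorm{F}^2 + 2\det F = \tr{U}^2$ recorded just above the theorem, this determinant equals $1$, confirming both the normalization and the sign structure.
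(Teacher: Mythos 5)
Your proof is correct and follows the same route the paper intends: the corollary is an immediate substitution of $\cos\palpha = \tr{F}/\tr{U}$ and $\sin\palpha = -\tr{JF}/\tr{U}$ from the preceding theorem (whose minimizer is identified with $\polar(F)$ via Grioli's theorem) into the standard parametrization $R(\palpha)$. Your additional check that the resulting matrix lies in $\SO(2)$ via $\tr{F}^2 + \tr{JF}^2 = \tr{U}^2$ is a harmless extra confirmation, consistent with the identity recorded in the paper just before the theorem.
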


\subsection{Symmetry of the first Cosserat deformation tensor}
\label{subsec:symmetry}
The goal of this subsection is two-fold: first, we want to solve the
equation \mbox{$\skew(\mrot^TF) = 0$} for $\mrot \in \SO(2)$ which is equivalent
to $\mrot^TF \in \Sym(2)$. The unique polar factor $\polar(F)$ is
certainly a solution, but are there others? Second, we want to introduce an
approach based on a rotation $\hat{R}$ \emph{relative to the polar
  factor} $\polar(F)$. This turns out to be essential to fully grasp the
symmetry of the non-classical minimizers $\rpolar(F)$.
This is the content of the next lemma.\footnote{Cf. also~\cite[Eq. (2.12)]{Neff_Biot07} for the case $n = 3$.}
\begin{lem}[Symmetry of the planar first Cosserat deformation tensor]
  \label{lem:mstretch_symmetry}
  Let $F \in \GL^+(2)$ be given. The first Cosserat deformation tensor
  $\mstretch(\mrot) \eqdef \mrot^TF$ is symmetric if and only if
  \begin{equation}
    \mrot = \pm\polar(F)\,.
  \end{equation}
\end{lem}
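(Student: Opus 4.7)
The plan is to reduce the symmetry condition $\mrot^T F \in \Sym(2)$ to an equation on the \emph{relative rotation} $\hat{R} \eqdef \polar(F)^T\mrot \in \SO(2)$, which turns the problem into a question about which planar rotations ``commute'' with the symmetric positive-definite stretch tensor $U = U(F) \in \Psym(2)$.

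The easy direction is verified by direct computation: for $\mrot = \polar(F)$ one has $\mrot^TF = U \in \Psym(2)$, while for $\mrot = -\polar(F) \in \SO(2)$ (note that $-\id \in \SO(2)$ since $n=2$ is even) we get $\mrot^TF = -U \in \Sym(2)$.

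For the converse, suppose $\mrot \in \SO(2)$ satisfies $\mrot^TF \in \Sym(2)$. Writing $\mrot = \polar(F)\hat{R}$ with $\hat{R} \in \SO(2)$ and using $\polar(F)^TF = U$, the condition becomes
\begin{equation}
  \hat{R}^T U \;=\; (\hat{R}^T U)^T \;=\; U\hat{R}\;.
\end{equation}
Parametrise $\hat{R} = \mrot(\beta)$ for some $\beta \in (-\pi,\pi]$ and write $U = \begin{pmatrix} a & b\\ b & c\end{pmatrix}$ with $a,c > 0$. A short expansion of $\hat{R}^TU$ shows that the off-diagonal entries agree if and only if
\begin{equation}
  (a + c)\,\sin\beta \;=\; 0\;.
\end{equation}
Since $a + c = \tr{U} > 0$ by positive definiteness of $U$, this forces $\sin\beta = 0$, hence $\beta \in \{0,\pi\}$, i.e., $\hat{R} = \pm\id$ and therefore $\mrot = \pm\polar(F)$.

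There is no genuine obstacle here; the only subtlety is the choice of parametrisation. The relative-rotation trick $\mrot = \polar(F)\hat{R}$ is the decisive step because it strips away the $F$-dependence and isolates the structural question ``which rotations leave a symmetric positive-definite matrix symmetric under left multiplication by their transpose?'' — for which positive definiteness of $U$ (equivalently $\tr U > 0$ in 2D) gives the answer immediately. This relative-rotation viewpoint will also be the natural language for parametrising the non-classical minimizers $\rpolar^{\pm}_{\mu,\mu_c}(F)$ later on.
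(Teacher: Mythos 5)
Your proof is correct and takes essentially the same route as the paper: both isolate the relative rotation $\hat{R}$ with respect to $\polar(F)$ as the decisive reduction, and both arrive at the condition $\sin\beta \cdot \tr{U} = 0$ with $\tr{U} > 0$ from positive definiteness. The only cosmetic difference is that the paper additionally conjugates by the eigenbasis $Q$ of $U$ to work with the diagonal matrix $D = \diag(\sigma_1,\sigma_2)$ before extracting the off-diagonal condition, whereas you compute directly with a generic symmetric $U = \begin{pmatrix}a & b\\ b & c\end{pmatrix}$; this shaves off one step here, though the principal-stretch transformation the paper sets up is reused later (e.g., in Theorem \ref{theo:malpha10pm}), so the paper's detour earns its keep downstream. (Note also that the paper's $\hat{R} = \mrot^T\polar(F)$ is the transpose of your $\hat{R} = \polar(F)^T\mrot$; since $\SO(2)$ is abelian and the condition $\sin\beta = 0$ is even in $\beta$, this makes no difference.)
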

\begin{proof}
  Let us first transform the equation into the orthogonal
  coordinate system induced by the principal directions of stretch.
  The orthogonal basis given by the eigendirections of $U$ makes up the
  columns of a matrix $Q \in \SO(2)$. As it turns out, it is natural
  to define a relative rotation in principal stretch coordinates which
  is given by
  \begin{equation}
    \label{eq:hat_Ralpha_planar}
    \hat{R}(\beta) \eqdef Q^T\mrot(\alpha)^T\polar(F)Q
    = \mrot(\alpha)^T\polar(F)\;.
  \end{equation}
  Note that the rightmost equality holds only for $\SO(2)$, because it is
  commutative. We expand
  $\mstretch \eqdef \mrot^TF = \mrot^T\polar(F)U = \mrot^T\polar(F)QDQ^T$
  and exploit $Q^T\skew(X)Q = \skew(Q^TXQ)$, i.e., the fact that $\skew$ is
  an isotropic tensor function. This gives
  \begin{equation}
    \skew(\mstretch) = 0 \isequivto Q^T\skew(\mrot^T\polar(F)QDQ^T)Q = 0
    \isequivto \skew(\underbrace{Q^T\mrot^T\polar(F)Q}_{=: \hat{R}}D\id) = 0\;,
  \end{equation}
  where $D \eqdef \diag(\sigma_1, \sigma_2) \eqdef \begin{pmatrix}\sigma_1 & 0\\
    0 & \sigma_2\end{pmatrix}$, is the diagonalization of $U$. A simple
    computation in components leads to the necessary and sufficient condition
  \begin{equation}
    0 = \skew(\hat{R}(\beta)D) = \begin{pmatrix}
      0 & \frac{1}{2}(\sigma_1 + \sigma_2)\sin\beta\\
      -\frac{1}{2}(\sigma_1 + \sigma_2)\sin\beta & 0\\
    \end{pmatrix}
    = \sin\beta\frac{\tr{D}}{2}
    \begin{pmatrix}
      0 & 1\\
      -1 & 0
    \end{pmatrix}\;.
  \end{equation}
  We conclude that the necessary and sufficient condition
  for $\mstretch \in \Sym(2)$ is $\sin(\beta) = 0$. Let us
  restrict $\beta \in (-\pi,\pi]$, then $\beta = 0 \lor \beta = \pi$,
    i.e., $\hat{R}(\beta) = \pm\id$. Substituting this
    into~\eqref{eq:hat_Ralpha_planar} and solving for $R(\alpha)$
    yields the claim.\qedhere
\end{proof}%

\begin{rem}[Symmetry of strains vs. symmetry of stresses]
  Consider an energy $W^\sharp(\mstretch)$. Then, the symmetry of the
  Cauchy stress tensor $\sigma(F) \eqdef 1/\det{F}\,R\,\mathrm{D}_{\mstretch} W^\sharp(\mstretch)\,F^T$ is equivalent to the symmetry of
  $\mathrm{D}_{\mstretch} W^\sharp(\mstretch)\,\mstretch^T$ as was shown
  in~\cite{Neff_Biot07}. We recall that the microstrain tensor
  $\mstretch - \id_2$ is symmetric if and only if
  $R = \pm\polar(F)$. This symmetry does
  imply that the Cauchy stress tensor is symmetric. \textbf{It is,
    however, possible that non-symmetric microstrains induce a
    symmetric Cauchy stress tensor.} This may be unexpected,
  but it is the natural scenario for the case of
  non-classical optimal Cosserat rotations. A thorough discussion
  is given in~\cite{Neff_Biot07}.
\end{rem}

\subsection{The limit case $(\mu,\mu_c) = (1,0)$ for $\mu > \mu_c \geq 0$}
We now approach the more interesting non-classical limit case
$(\mu, \mu_c) = (1,0)$ and compute the optimal rotations for
$W_{\mu,\mu_c}(\mrot\,;F)$. Note that, due to~\leref{lem:parameter_reduction},
this limit case represents the entire non-classical parameter
range $\mu > \mu_c \geq 0$.

In the proof to~\leref{lem:parameter_reduction}, we have introduced
$\widetilde{W}_{1,0} \sim W_{1,0}$, i.e., a modified energy that gives
rise to the same optimal rotations. In a similar spirit, inserting the
particular values $(\mu,\mu_c) = (1,0)$ into~\eqref{eq:reduction:wmm_expanded}
from the proof of~\leref{lem:parameter_reduction}, we can specialize to
$n = 2$ as follows:
\begin{align}
  W_{1,0}(\mrot\,;F)
  &\stackrel{\phantom{\eqref{eq:square_trace_id}}}{=}\;\frac{1}{2}\,\tr{(\mrot^TF)^2} - 2\,\tr{\mrot^TF} + \frac{1}{2}\,\hsnorm{F}^2 + \hsnorm{\id_2}^2\notag\\
  &\stackrel{\eqref{eq:square_trace_id}}{=}\;
    \underbrace{\frac{1}{2}\,\tr{\mrot^TF}^2 - 2\,\tr{\mrot^TF}}_{=:\;\mathring{W}(\mrot\,;F)}
    + \underbrace{\frac{1}{2}\,\norm{F}^2 - \det{F} + 2}_{=:\;\mathring{c}(F)}
    =: \mathring{W}(\mrot\,;F) + \mathring{c}(F)\;.\label{eq:defi:wring}
\end{align}
This implies $\mathring{W} \sim W_{1,0}$, since both energies
differ by a constant (with respect to $R$)
\begin{equation}
  \mathring{c}(F) \; = \; \frac{1}{2}\norm{F}^2 - \det{F} + 2
                  \;\stackrel{\eqref{eq:square_trace_id}}{=} \;\frac{1}{2}\tr{U}^2 -2\,\det{U} + 2\;.
\end{equation}

The next step is to determine the critical rotations for $W_{1,0}(\mrot\,;F)$
by taking derivatives w.r.t. $\mrot\,\in\,\SO(2)$. Let us compute
the necessary conditions.

\begin{theo}[Characterization of the critical rotations for $W_{1,0}(\mrot\,;F)$]
  \label{theo:char_rcrit10}
  Let $F \in \GL^+(2)$. A rotation $\mrot\,\in\,\SO(2)$ is a critical
  point for the energy $W_{1,0}(\mrot\,;F)$ if and only if
  \begin{equation*}
    \skew(\mrot^TF) = 0\;,
    \quad\quad \text{or}\quad\quad
    \tr{\mrot^TF}  = 2 \;\land\; \tr{U} \geq 2\;.
  \end{equation*}
\end{theo}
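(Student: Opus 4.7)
The plan is to exploit the reformulation already given in~\eqref{eq:defi:wring}, namely
$W_{1,0}(\mrot\,;F) = \mathring{W}(\mrot\,;F) + \mathring{c}(F)$ with
$\mathring{W}(\mrot\,;F) = \tfrac12 \tr{\mrot^TF}^2 - 2\,\tr{\mrot^TF}$,
and observe that $W_{1,0}$ and $\mathring{W}$ share the same critical points in $\mrot$. The key simplification is that $\mathring W$ depends on $\mrot$ only through the scalar $\phi(\alpha) \eqdef \tr{\mrot(\alpha)^TF}$, so critical points are detected from a one-variable factorization.

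First I would introduce the parametrization $\mrot(\alpha)$ from the introduction and record the explicit formula
$\phi(\alpha) = \cos\alpha\,\tr{F} - \sin\alpha\,\tr{JF}$, using $\mrot(\alpha) = \cos\alpha\,\id + \sin\alpha\,J$. Then $\mathring{W}(\mrot(\alpha)\,;F) = \tfrac12 \phi(\alpha)^2 - 2\phi(\alpha)$ and differentiating gives the clean factorization
\begin{equation*}
  \ddsat{\mathring{W}(\mrot(s)\,;F)}{\alpha} \;=\; (\phi(\alpha) - 2)\,\phi'(\alpha)\;.
\end{equation*}
Hence $\mrot(\alpha)$ is critical precisely when $\phi'(\alpha) = 0$ or $\phi(\alpha) = 2$.

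Next I would identify these two factors with the two geometric conditions in the statement. Using that $J$ commutes with every $R \in \SO(2)$, one computes
$\phi'(\alpha) = -\sin\alpha\,\tr{F} - \cos\alpha\,\tr{JF} = -\tr{J\mrot(\alpha)^TF}$. In dimension two, for any $X \in \R^{2\times 2}$ one has $\tr{JX} = X_{21} - X_{12}$, which vanishes exactly when $X$ is symmetric; applied to $X = \mrot^TF$ this gives the equivalence $\phi'(\alpha) = 0 \isequivto \skew(\mrot^TF) = 0$, the first branch of the theorem.

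Finally, I would analyze when the equation $\phi(\alpha) = 2$ is solvable. From the explicit cosine-sine form, $\phi$ ranges over the interval $[-\sqrt{\tr{F}^2 + \tr{JF}^2},\,\sqrt{\tr{F}^2 + \tr{JF}^2}\,]$. Invoking the planar identity $\tr{F}^2 + \tr{JF}^2 = \tr{U}^2$ already recorded in the excerpt, this is $[-\tr{U}, \tr{U}]$, so $\phi(\alpha) = 2$ admits a solution if and only if $\tr{U} \geq 2$. The main (mild) obstacle is the careful treatment of this solvability condition, and in particular the justification that in the boundary case $\tr{U} = 2$ the unique solution is exactly the polar factor, so the two branches in the theorem's statement are consistent. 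Combining the three points yields the full biconditional.
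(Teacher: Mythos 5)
Your proof is correct and takes essentially the same route as the paper: both reduce $W_{1,0}$ to a quadratic in the scalar $\phi = \tr{\mrot^TF}$, obtain the chain-rule factorization $(\phi - 2)\,\phi' = 0$, identify vanishing of the second factor with $\skew(\mrot^TF) = 0$, and use $\tr{F}^2 + \tr{JF}^2 = \tr{U}^2$ to settle solvability of $\phi = 2$. The only difference is cosmetic — the paper phrases the first-order condition via a Lie-algebra variation $\delta\mrot = A\,\mrot$, $A \in \so(2)$, while you unroll it through the explicit angle parametrization; in $\SO(2)$ these are the same computation. (One trivial slip: you write $\tr{JX} = X_{21} - X_{12}$ whereas with $J = \left(\begin{smallmatrix}0 & -1\\ 1 & 0\end{smallmatrix}\right)$ one has $\tr{JX} = X_{12} - X_{21}$; the sign does not affect the vanishing condition. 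Also, the consistency check at $\tr{U}=2$ that you flag as a "mild obstacle" is not actually needed for the biconditional, since the two branches are not claimed to be disjoint.)
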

\begin{proof}
  Taking variations $\delta \mrot = A\cdot\mrot\, ,A\in\so(2)$,
  we arrive at the stationarity condition
  \begin{align}
    \forall\, A\in\so(2):\quad \left(\tr{\mrot^TF}-2\right)\, \scalprod{\mrot^TF}{A}=0\;.
  \end{align}
  This equation holds good, if and only if either of the two factors
  on the left hand side vanishes.

  In~\leref{lem:mstretch_symmetry}, we have shown that $\skew(\mrot^TF) = 0$,
if and only if $R = \pm \polar(F)$. Let us discuss the second possibility
\begin{align}
  \tr{\mrot(\malpha)^TF}=2
  \quad \isequivto \quad
  \scalprod{\underbrace{\begin{pmatrix}
      \cos\malpha\\
      \sin\malpha
  \end{pmatrix}}_{=: v(\malpha)}}{\underbrace{\begin{pmatrix}
      \tr{F} \\ \tr{JF}
  \end{pmatrix}}_{=: w(F)}} = 2
\end{align}
as an equation for $\malpha$. The equation on the right hand side
is easily obtained by a short computation in components. From the relation
$\scalprod{v(\malpha)}{w(F)} = \cos\malpha\,\norm{v(\malpha)}\norm{w(F)} = 2$
it follows that the angle $\malpha$ can only be solved for, if
\begin{align}
\frac{2}{\norm{w(F)}} = \frac{2}{\sqrt{\tr{F}^2+\tr{JF}^2}} = \frac{2}{\tr{U}} \le 1\;.
\end{align}
For $\tr{U} \geq 2$, due to the symmetry of the cosine, there exist two symmetric
solutions for $\malpha$ (which may coincide). For $0 < \tr{U} < 2$ there is
no solution.\qedhere
\end{proof}

Let us enumerate the previously obtained necessary conditions
for a critical point:
\begin{equation*}
  \mathrm{1}.)\quad \mrot=-\polar(F)\;, \quad\quad
  \mathrm{2}.)\quad \mrot=+\polar(F)\;, \quad\text{and}\quad
  \mathrm{3}.)\quad \tr{\mrot^TF} = 2 \;\land\; \tr{U} \geq 2\;.
\end{equation*}
Note that for the two classical critical points $\mp\polar(F)$, we have
$\tr{\mrot^TF} = \mp\tr{U}$. Due to the particular expression of the energy
$\mathring{W}$ in terms of $\tr{\,\mstretch\,}$, we can insert the critical
values of $\tr{\mrot^TF}$ into the defining
equation~\eqref{eq:defi:wring} for $\mathring{W}$ to obtain the associated
critical energy levels:
\begin{equation*}
\label{eq:defi_Wi}
\mathring{W}^{(1)}(F) = \frac{1}{2}\tr{U}^2 + 2\,\tr{U}\;,\quad\quad
\mathring{W}^{(2)}(F) = \frac{1}{2}\tr{U}^2 - 2\,\tr{U}\;,\quad\text{and}\quad
\mathring{W}^{(3)}(F) = -2\;.
\end{equation*}
Our first observation is that $\mathring{W}^{(1)}(F) \geq \mathring{W}^{(2)}(F)$
for all $F \in \GL^+(2)$. Further, if $\tr{U} \ge 2$, then the branch
$\mathring{W}^{(3)}(F)$ exists and we have $\mathring{W}^{(1)}(F) \geq \mathring{W}^{(2)}(F) \geq \mathring{W}^{(3)}(F)$, i.e., the non-classical branch
$\mathring{W}^{(3)}$ realizes the global minimum when it exists.
For $0 < \tr{U} < 2$, the branch $\mathring{W}^{(3)}$ does not exist and the
global minimum is realized by the classical branch $\mathring{W}^{(2)}$,
i.e., $\polar(F)$ is uniquely optimal. Finally, for $\tr{U} = 2$,
we have $\mathring{W}^{(1)} > \mathring{W}^{(2)} = \mathring{W}^{(3)}$.

\begin{theo}[The formally reduced energy $W^{\rm red}_{1,0}(F)$]
  \label{theo:wred10}
  Let $F \in \GL^+(2)$, let the energies $\mathring{W}^{(1)}$, $\mathring{W}^{(2)}$
  and $\mathring{W}^{(3)}$ as above and $W^{(i)}(F) \eqdef \mathring{W}^{(i)}(F) +  \mathring{c}(F), i = 1,2,3$. Then, the {\bf formally reduced energy}
  \begin{equation}
    W_{1,0}^{\rm red}(F) \eqdef \min_{R \; \in \; \SO(2)}W_{1,0}(\mrot\,;F)
                       \eqdef \min_{\mrot\,\in\,\SO(2)} \norm{\sym(\mrot^TF-\id)}^2
  \end{equation}
is given by
\begin{equation}
  \label{eq:minimization:wsymred}
    W_{1,0}^{\rm red}(F)
    = \begin{cases}
      W^{(2)}(F) = \tr{(U - \id)^2} = \dist^2(F,\SO(2))\;,   &\quad\text{if} \quad \tr{U} < 2\\
      W^{(3)}(F) = \frac{1}{2}\hsnorm{F}^2 - \det{F}
                \stackrel{\eqref{eq:square_trace_id}}{=} \frac{1}{2}\,\tr{U}^{2} - 2\,\det{U}\;, &\quad\text{if}\quad \tr{U} \ge 2\;.
    \end{cases}
  \end{equation}
\end{theo}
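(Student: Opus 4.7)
The plan is to assemble the reduced energy directly from the critical point analysis already carried out in Theorem~\ref{theo:char_rcrit10} and the subsequent enumeration of branches. Since $\SO(2)$ is compact without boundary and $W_{1,0}(\,\cdot\,;F)$ is smooth, the global minimum must be attained at one of the critical points. Thus the minimization reduces to a pointwise comparison of the three critical energy branches $\mathring{W}^{(1)},\mathring{W}^{(2)},\mathring{W}^{(3)}$, followed by translating back from $\mathring{W}$ to $W_{1,0}$ using $W_{1,0}(R;F) = \mathring{W}(R;F) + \mathring{c}(F)$.

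First I would dispense with branch~(1), which corresponds to $R = -\polar(F)$ and yields $\mathring{W}^{(1)}(F) = \tfrac{1}{2}\tr{U}^2 + 2\tr{U}$. Since $\tr{U} > 0$ on $\GL^+(2)$, we have $\mathring{W}^{(1)} > \mathring{W}^{(2)}$, so this branch is never minimizing. The decisive comparison is then between $\mathring{W}^{(2)}$ (classical, polar factor) and $\mathring{W}^{(3)}$ (non-classical, existing only when $\tr{U} \ge 2$):
\begin{equation*}
  \mathring{W}^{(2)}(F) - \mathring{W}^{(3)}(F) = \tfrac{1}{2}\tr{U}^2 - 2\tr{U} + 2 = \tfrac{1}{2}(\tr{U} - 2)^2 \;\ge\; 0\;,
\end{equation*}
with equality precisely when $\tr{U} = 2$. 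Consequently, for $0 < \tr{U} < 2$ only branches~(1) and~(2) exist and $\mathring{W}^{(2)}$ is the minimum; for $\tr{U} \ge 2$ branch~(3) is available and $\mathring{W}^{(3)}$ becomes the minimum (tying with $\mathring{W}^{(2)}$ at the threshold $\tr{U} = 2$).

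It remains to bring the minimum into the stated explicit form. For the classical branch I would add the constant:
\begin{equation*}
  W^{(2)}(F) = \tfrac{1}{2}\tr{U}^2 - 2\tr{U} + \tfrac{1}{2}\tr{U}^2 - 2\det{U} + 2
             = \tr{U^2} - 2\tr{U} + 2 = \tr{(U-\id)^2}\;,
\end{equation*}
where I used the planar Cayley--Hamilton identity $\tr{U^2} = \tr{U}^2 - 2\det{U}$ from the appendix and $\tr{\id_2} = 2$. This coincides with $\dist^2(F,\SO(2)) = \norm{U - \id}^2$ by Grioli's theorem. For the non-classical branch, the computation is even shorter:
\begin{equation*}
  W^{(3)}(F) = -2 + \tfrac{1}{2}\norm{F}^2 - \det{F} + 2 = \tfrac{1}{2}\norm{F}^2 - \det{F}\;,
\end{equation*}
and applying $\norm{F}^2 = \tr{U^2} = \tr{U}^2 - 2\det{U}$ and $\det{F} = \det{U}$ yields the claimed form $\tfrac{1}{2}\tr{U}^2 - 2\det{U}$.

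There is no real obstacle; the analytic heavy lifting was already done in Theorem~\ref{theo:char_rcrit10} and the paragraph of enumerated critical branches. The only item requiring vigilance is the algebraic simplification, where one must consistently invoke $F^TF = U^2$ and the two-dimensional identities $\tr{U^2} = \tr{U}^2 - 2\det{U}$, $\det{F} = \det{U}$ to pass between formulations in $F$, in $U$, and in $(\tr{U},\det{U})$; writing out the threshold case $\tr{U} = 2$ explicitly confirms that the piecewise definition is continuous, which is a useful sanity check.
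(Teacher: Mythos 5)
Your proof is correct and follows essentially the same route as the paper's: invoke compactness to reduce the minimization to a comparison of the critical energy levels from Theorem~\ref{theo:char_rcrit10}, then add back the constant $\mathring{c}(F)$ and simplify using the planar identity $\tr{U^2} = \tr{U}^2 - 2\det{U}$. The one place you go slightly further than the paper is the explicit factorization $\mathring{W}^{(2)} - \mathring{W}^{(3)} = \tfrac{1}{2}(\tr{U}-2)^2 \ge 0$, which is a clean way to see both the ordering and the equality at the threshold $\tr{U} = 2$ at once.
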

\begin{proof}
  It suffices to add the constant $\mathring{c}(F)$ to the minimal
    energy levels for $\mathring{W}(\mrot\,;F)$.
  Since $W^{(2)}(F)$ corresponds to $R = \polar(F)$ for which
  $R^TF = U$ is symmetric, we find that
  $W^{(2)}(F) = \min_{R \in \SO(2)} W_{1,1}(R\,;F) = \dist^2(F,\SO(2))$.
  Note that for $\tr{U} = 2$, we have $W^{(2)}(F) = W^{(3)}(F)$.\qedhere
\end{proof}%

It is well-known that any orthogonally invariant energy density
$W(F)$ admits a representation in terms of the singular values of $F$, i.e.,
in the eigenvalues of $U$. Let us give this representation.
\begin{cor}[Representation of $W^{\rm red}_{1,0}(F)$ in the singular values
    of $F$]
  \label{cor:wred10_sing}
  Let $F \in \GL^+(2)$ and denote its singular values by $\sigma_i$, $i = 1,2$.
  The representation of $W^{\rm red}_{1,0}(F)$ in the singular values of $F$
  is given by
  \begin{equation}
    W_{1,0}^{\rm red}(F) = W_{1,0}^{\rm red}(\sigma_1,\sigma_2) =
    \begin{cases}
      (\sigma_1 - 1)^2 + (\sigma_2 - 1)^2\;,  &\text{if}\quad \sigma_1 + \sigma_2  < 2\\
      \frac{1}{2}(\sigma_1 - \sigma_2)^2\;,  &\text{if}\quad\sigma_1 + \sigma_2 \geq 2\;.
    \end{cases}
  \end{equation}\
\end{cor}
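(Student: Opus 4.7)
The plan is to simply substitute the spectral data of $U$ into the two branches of $W^{\rm red}_{1,0}(F)$ provided by Theorem~\ref{theo:wred10}. Since $F = \polar(F)\,U$ is the right polar decomposition, the singular values of $F$ coincide with the eigenvalues of the symmetric positive definite stretch tensor $U \in \Psym(2)$. In particular, one has the spectral identities $\tr{U} = \sigma_1 + \sigma_2$, $\tr{U^2} = \sigma_1^2 + \sigma_2^2$, and $\det{U} = \sigma_1\sigma_2$. The case distinction $\tr{U} < 2$ versus $\tr{U} \geq 2$ in Theorem~\ref{theo:wred10} then translates directly into $\sigma_1 + \sigma_2 < 2$ versus $\sigma_1 + \sigma_2 \geq 2$.

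First I would treat the subcritical range $\sigma_1 + \sigma_2 < 2$. Here Theorem~\ref{theo:wred10} gives $W^{\rm red}_{1,0}(F) = \tr{(U - \id)^2}$, and expanding yields
\begin{equation*}
  \tr{(U - \id)^2} = \tr{U^2} - 2\tr{U} + \tr{\id} = (\sigma_1^2 + \sigma_2^2) - 2(\sigma_1 + \sigma_2) + 2 = (\sigma_1 - 1)^2 + (\sigma_2 - 1)^2\;,
\end{equation*}
which is exactly the claimed formula on the classical branch.

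Next I would treat the supercritical range $\sigma_1 + \sigma_2 \geq 2$, where the reduced energy equals $\frac{1}{2}\tr{U}^2 - 2\det{U}$. A direct expansion gives
\begin{equation*}
  \tfrac{1}{2}\,(\sigma_1 + \sigma_2)^2 - 2\,\sigma_1\sigma_2 \;=\; \tfrac{1}{2}\,(\sigma_1^2 + 2\sigma_1\sigma_2 + \sigma_2^2) - 2\sigma_1\sigma_2 \;=\; \tfrac{1}{2}\,(\sigma_1 - \sigma_2)^2\;,
\end{equation*}
matching the non-classical branch of the corollary.

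There is no real obstacle in this proof; the work has already been done in Theorem~\ref{theo:wred10}, and the statement is a purely algebraic rewriting. The only small sanity check worth mentioning is continuity at the transition $\sigma_1 + \sigma_2 = 2$: substituting $\sigma_2 = 2 - \sigma_1$ into both branches yields $2(\sigma_1 - 1)^2$, so the two pieces agree on the threshold, in agreement with the equality $W^{(2)}(F) = W^{(3)}(F)$ noted at $\tr{U} = 2$ in the proof of Theorem~\ref{theo:wred10}.
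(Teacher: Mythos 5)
Your proposal is correct and follows essentially the same route as the paper: both simply insert the spectral identities $\tr{U} = \sigma_1 + \sigma_2$, $\hsnorm{F}^2 = \sigma_1^2 + \sigma_2^2$, $\det{F} = \sigma_1\sigma_2$ into the two branches of Theorem~\ref{theo:wred10} and note the agreement of the branches at $\sigma_1 + \sigma_2 = 2$. Your explicit continuity check at the threshold is the same observation the paper makes, just carried out in the singular-value variables.
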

\begin{proof}
  We insert $\norm{F}^2 = \norm{U}^2 = \sigma_1^2 + \sigma_2^2$ and
  $\det{F} = \det{U} = \sigma_1\sigma_2$
  into~\eqref{eq:minimization:wsymred}. It is not hard to see
  that both pieces of the energy coincide for $\sigma_1 + \sigma_2 = 2$.\qedhere
\end{proof}%
Note that the previous formulae are independent of the enumeration
of the singular values.

\subsubsection{Optimal relative rotations for \texorpdfstring{$\mu = 1$}{mu = 1} and \texorpdfstring{$\mu_c = 0$}{muc = 0}}

Our next goal is to compute explicit representations of the rotations
$\rpolar^\pm_{1,0}(F)$ which realize the minimal energy level $W^{(3)}(F)$
in the non-classical limit case $(\mu,\mu_c) = (1,0)$. This is the content
of the next theorem for which we now prepare the stage with the following
\begin{lem}
  \label{lem:eq_tr_hatrd_eq_2}
  Let $D = \diag(\sigma_1,\sigma_2) > 0$, i.e, a diagonal matrix with
  strictly positive diagonal entries. Then, assuming $\tr{D} \geq 2$,
  the equation $\tr{R(\beta)\,D} = 2$ has the following solutions
    \begin{equation}
      \beta^\pm
      \;=\;
      \pm\,\arccos\left(\frac{2}{\tr{D}}\right) \quad\in [-\pi,\pi]\;.
    \end{equation}
    For $\tr{D} < 2$, there exists no solution, but we can define
    $\beta = \beta^\pm \eqdef 0$ by continuous extension.
\end{lem}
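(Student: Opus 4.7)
The plan is to reduce the equation to a one-variable cosine equation by direct computation of the trace. First I would multiply out $R(\beta)\,D$ with the standard parametrization of $R(\beta)$ and the diagonal matrix $D = \diag(\sigma_1, \sigma_2)$. The product has diagonal entries $\sigma_1 \cos\beta$ and $\sigma_2 \cos\beta$ and off-diagonal entries $\mp\sigma_i \sin\beta$, so those do not enter the trace. Thus
\begin{equation*}
  \tr{R(\beta)\,D} = (\sigma_1 + \sigma_2)\cos\beta = \tr{D}\,\cos\beta\;,
\end{equation*}
and the equation $\tr{R(\beta)\,D} = 2$ reduces to $\cos\beta = 2/\tr{D}$.

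Next I would analyze this scalar equation in the two parameter regimes. Since $D > 0$, we have $\tr{D} > 0$, so $2/\tr{D} > 0$. If $\tr{D} \geq 2$, then $0 < 2/\tr{D} \leq 1$ and $\arccos$ is well-defined; restricted to $\beta \in [-\pi,\pi]$, the cosine equation has precisely the two solutions $\beta^\pm = \pm\arccos(2/\tr{D})$ (which coincide at $0$ exactly when $\tr{D} = 2$). If $\tr{D} < 2$, then $2/\tr{D} > 1$ lies outside the range of cosine and no real solution exists.

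Finally I would address the continuous extension. As $\tr{D} \searrow 2$ from above, we have $2/\tr{D} \nearrow 1$ and hence $\arccos(2/\tr{D}) \searrow \arccos(1) = 0$, so setting $\beta^\pm \eqdef 0$ whenever $\tr{D} < 2$ yields a continuous extension across the threshold $\tr{D} = 2$. This is essentially a routine calculation; I do not anticipate any real obstacle, the only minor point being to verify that the two candidate solutions exhaust all $\beta \in [-\pi,\pi]$ with the prescribed cosine, which follows from the standard monotonicity of $\arccos$ on $[-1,1]$ and the symmetry $\cos(-\beta) = \cos(\beta)$.
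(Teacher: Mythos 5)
Your proof is correct and follows essentially the same route as the paper's: compute $\tr{R(\beta)D} = \tr{D}\cos\beta$ directly, reduce to the scalar equation $\cos\beta = 2/\tr{D}$, analyze solvability via the range of cosine, and extend continuously by sending $\beta^\pm \to 0$ as $\tr{D} \searrow 2$. No gaps.
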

\begin{proof}
  We compute
  \begin{equation}
    2 = \tr{\begin{pmatrix}
        \cos \beta  & -\sin \beta\\
        \sin \beta  &  \cos \beta
      \end{pmatrix}
      \begin{pmatrix}
        \sigma_1 & 0\\
        0 & \sigma_2
    \end{pmatrix}}
    = (\sigma_1 + \sigma_2)\cos\beta
    = \tr{D}\cos\beta\;.
  \end{equation}
  Since $\tr{D} > 0$, we may divide to obtain the relation
  $\cos \beta = 2/\tr{D}$. This is solvable if and only
  if $2/\tr{D} \leq 1$ which is equivalent to $\tr{D} \geq 2$.
  There are two symmetric solutions $\beta^\pm = \pm\,\arccos(2/\tr{D})$.
  Since both vanish for $\tr{D} = 2$, we can
  continously extend $\beta = \beta^\pm \eqdef 0$ for $\tr{D} < 2$.\qedhere
\end{proof}%

Our~\figref{fig:alpha_rel_plot} shows a plot of the optimal
relative rotation angle $\beta(\tr{U})$.
In the classical parameter range $0 < \tr{U} \leq 2$,
$\palpha(F)$ is uniquely optimal and $\beta$ vanishes identically.
In \mbox{$\tr{U} = 2$}, a classical pitchfork bifurcation occurs. In
particular, due to $\tr{U(\id_2)} = \tr{\id_2} = 2$,
the identity matrix is a bifurcation point of $\beta^\pm(F)$.
Further, we note that the branches $\beta^\pm(\tr{U}) = \pm\arccos(2/\tr{U})$
\emph{are not differentiable} at $\tr{U} = 2$. This has implications
on the interaction of the Cosserat shear-stretch energy with the
Cosserat curvature energy $W_{\rm curv}$.
\begin{figure}
  \begin{center}
    \includegraphics{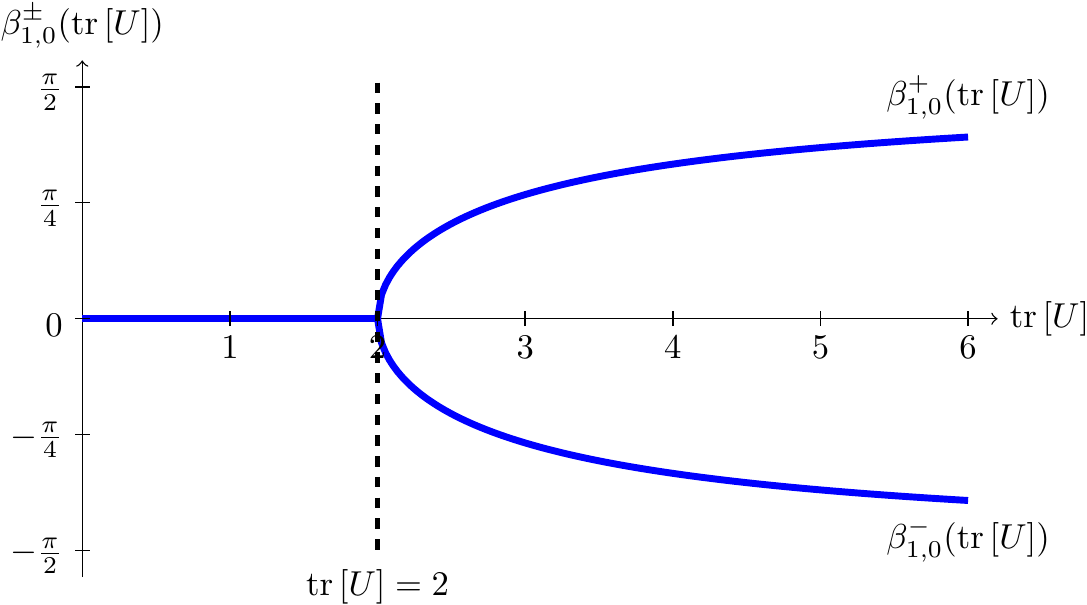}
  \end{center}
  \caption[The optimal relative rotation angle $\beta_{1,0}$ as a function of $\tr{U}$.]{
    Plot of the two optimal relative rotation angles
    $\beta_{1,0}^\pm = \pm\arccos\left(\frac{2}{\tr{U}}\right)$
    for the non-classical limit case $(\mu,\mu_c) = (1,0)$.
    Note the pitchfork bifurcation in $\tr{U} = \rho_{1,0} = 2$.
    For $0 < \tr{U} < 2$, the polar angle $\palpha$ is uniquely
    optimal and the relative rotation angle $\beta$ vanishes
    identically.\label{fig:alpha_rel_plot}%
    }
\end{figure}

\begin{theo}[Optimal non-classical microrotation angles $\malpha_{1,0}^\pm$]
  \label{theo:malpha10pm}
  Let $F \in \GL^+(2)$ and consider $(\mu,\mu_c) = (1,0)$. The optimal
  rotation angles for $\wsym$ are given by
  \begin{equation}
    \malpha_{1,0}^{\pm}(F) =
    \begin{cases}
      \palpha(F) = \arccos(\frac{\tr{F}}{\tr{U}})
      & ,\quad\text{if}\qquad \tr{U} < 2\\
      \palpha(F) \pm\,\arccos\left(\frac{2}{\tr{U}}\right)
      & ,\quad\text{if}\qquad \tr{U} \geq 2\;.
    \end{cases}
  \end{equation}
\end{theo}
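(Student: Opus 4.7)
The plan is to combine the critical point characterization of \theref{theo:char_rcrit10} with the energy-level comparison carried out just before \theref{theo:wred10}, and then parametrize the non-classical critical branch $\tr{\mrot^TF} = 2$ in terms of a relative rotation angle using \leref{lem:eq_tr_hatrd_eq_2}. Since the global minimizers of $\wsym(\mrot\,;F)$ have already been identified (a) as the polar factor $\polar(F)$ in the compressive regime $\tr{U} < 2$ and (b) as the solutions of $\tr{\mrot^TF} = 2$ in the extensional regime $\tr{U} \geq 2$, only the explicit parametrization of case (b) remains to be established.

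For $\tr{U} < 2$, there is nothing to do: $\polar(F)$ is the unique global minimizer, and \coref{cor:polar_planar} gives directly $\palpha(F) = \arccos(\tr{F}/\tr{U})$, so both branches $\malpha_{1,0}^{\pm}$ collapse to $\palpha$, in agreement with the continuous extension $\beta^\pm \eqdef 0$ in \leref{lem:eq_tr_hatrd_eq_2}. For $\tr{U} \geq 2$, I would introduce the relative rotation
\begin{equation*}
  \hat{R}(\beta) \eqdef Q^T\mrot(\malpha)^T\polar(F)\,Q \;=\; \mrot(\malpha)^T\polar(F)\;,
\end{equation*}
exactly as in the proof of \leref{lem:mstretch_symmetry}, where $Q \in \SO(2)$ diagonalizes $U = Q\,D\,Q^T$ with $D = \diag(\sigma_1,\sigma_2)$. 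The cyclicity of the trace and $U = \polar(F)^T F$ then give
\begin{equation*}
  \tr{\mrot^TF} \;=\; \tr{\mrot^T\polar(F)\,Q\,D\,Q^T} \;=\; \tr{Q^T\mrot^T\polar(F)\,Q\,D} \;=\; \tr{\hat{R}(\beta)\,D}\;.
\end{equation*}
Hence the non-classical critical equation $\tr{\mrot^TF} = 2$ becomes $\tr{\hat{R}(\beta)\,D} = 2$, and \leref{lem:eq_tr_hatrd_eq_2} (applied to $D$, noting $\tr{D} = \tr{U}$) yields exactly two relative solutions $\beta^\pm = \pm\arccos\bigl(2/\tr{U}\bigr)$.

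To convert these into absolute microrotation angles I would use the fact that $\SO(2)$ is abelian, so $\mrot(\malpha)^T\polar(F) = R(\beta)$ is equivalent to $\mrot(\malpha) = \polar(F)\,R(-\beta) = R(\palpha - \beta)$; thus $\malpha = \palpha - \beta^\pm$, and since $\beta^\pm$ appears symmetrically with both signs, the set of optimal angles is $\{\palpha + \arccos(2/\tr{U}),\,\palpha - \arccos(2/\tr{U})\}$, which I relabel as $\malpha_{1,0}^\pm$. Insertion of $\palpha(F) = \arccos(\tr{F}/\tr{U})$ from \coref{cor:polar_planar} then delivers the stated explicit formula.

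The only genuinely delicate point is the bookkeeping at the bifurcation value $\tr{U} = 2$: one must confirm that the continuous extension $\beta^\pm \eqdef 0$ used in \leref{lem:eq_tr_hatrd_eq_2} correctly merges the non-classical branches with the classical one, so that $\malpha_{1,0}^\pm$ is a well-defined continuous function of $F$ across the transition (this matches the observation after \theref{theo:wred10} that $W^{(2)} = W^{(3)}$ on $\tr{U} = 2$). Everything else is routine trigonometry in $\SO(2)$; the conceptual work has already been done by \theref{theo:char_rcrit10}, the energy comparison, and the relative-angle reduction of \leref{lem:mstretch_symmetry}.
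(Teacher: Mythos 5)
Your argument follows the paper's own proof essentially line by line: reduce to the critical-point classification of Theorem \ref{theo:char_rcrit10}, pass to the relative rotation $\hat{R}(\beta)=R(\alpha)^T\polar(F)$ as in Lemma \ref{lem:mstretch_symmetry}, transform to principal stretch coordinates to turn $\tr{R^TF}=2$ into $\tr{\hat{R}(\beta)D}=2$, invoke Lemma \ref{lem:eq_tr_hatrd_eq_2}, and recover $\alpha^\pm_{1,0}=\palpha-\beta^\pm$. The only cosmetic difference is that you note explicitly that the sign convention is a relabeling of the two-element set $\{\palpha\pm\arccos(2/\tr U)\}$, which the paper implicitly absorbs by writing $\palpha\mp\arccos(2/\tr U)$ in its final line; the bifurcation remark at $\tr U=2$ is a correct consistency observation but not needed to close the proof.
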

\begin{proof}
  The first statement follows from \theref{theo:wred10} which shows
  that $\polar(F)$ realizes $W^{(2)}$. Further, by~\theref{theo:char_rcrit10}
  (see also the proof) the branch $W^{(2)}$ uniquely corresponds to
  $\polar(F)$. In other words
  \begin{equation}
    \tr{U} < 2
    \quad\Longrightarrow\quad
    \malpha_{\mu,\mu_c}(F) = \palpha(F)
    \quad\Longrightarrow\quad
    \rpolar(F) = \polar(F)\;.
  \end{equation}
  Let us now assume $\tr{U} \geq 2$. In this case,
  by~\theref{theo:char_rcrit10}, globally
  energy minimizing rotations $\mrot$ realize $W^{(3)}$.
  Thus, $\malpha \in (-\pi,\pi]$ is a solution of
    $\tr{\,\mrot(\malpha)^TF} = 2$ for given $F \in \GL^+(2)$.
    Consider again the relative rotation (cf. the proof
    of~\leref{lem:mstretch_symmetry}) given by
    \begin{equation}
      \hat{R}(\beta) \eqdef \mrot(\malpha)^T\polar(F)\;.
      \label{eq:hat_Ralpha_rel_w10}
  \end{equation}
  The uniqueness of $\polar(F)$ implies a one-to-one correspondence between
  $\mrot(\malpha)$ and $\hat{R}(\beta)$. In terms of rotation angles, we
  find that
  \begin{equation}
    \malpha = \palpha - \beta\label{eq:angle_ids}\;,
  \end{equation}
  where $\palpha$ denotes the rotation angle of the
  polar rotation $\polar(F)$, i.e., $R(\palpha) = \polar(F)$.
  After a transformation into the coordinate system given by the principal
  directions of stretch (i.e., given by the eigendirections of $U$), we obtain
  \begin{align}
    2 &= \tr{\mrot^TF} = \tr{\hat{R}(\beta)D}\;.
  \end{align}
  Applying~\leref{lem:eq_tr_hatrd_eq_2} we find that there are two
  energy-minimizing relative rotation angles
  \begin{equation}
    \beta^\pm = \pm\,\arccos\left(\frac{2}{\tr{D}}\right) = \pm\,\arccos\left(\frac{2}{\tr{U}}\right)\quad,\quad\text{for}\;\tr{U} \geq 2\;.
  \end{equation}
  We can now solve~\eqref{eq:angle_ids} for the corresponding microrotation
  angles $\malpha_{1,0}^\pm$ which gives
  \begin{equation}
    \malpha^\pm_{1,0}  = \palpha - \beta^\pm
          = \palpha \mp\,\arccos\left(\frac{2}{\tr{U}}\right)\;.
  \end{equation}
  The second equality is just another application of~\leref{lem:trU}.\qedhere
\end{proof}%

\subsection{General values for~\texorpdfstring{$\mu$}{mu} and \texorpdfstring{$\mu_c$}{muc}}
\label{subsec:minimization:reconstruction}
The reduction for $\mu$ and $\mu_c$ in~\leref{lem:parameter_reduction} asserts
that the optimal rotations for arbitrary values of $\mu > 0$ and $\mu_c \geq 0$
can be reconstructed from the limit cases $(\mu, \mu_c) = (1,1)$ and
$(\mu, \mu_c) = (1,0)$. We now detail this procedure which essentially
exploits~\deref{defi:reduction:rescaling}.

Note first that the rescaled deformation gradient $\widetilde{F}_{\mu,\mu_c} \eqdef \lambda^{-1}_{\mu,\mu_c} F$ induces a rescaled stretch tensor
\begin{equation}
  \widetilde{U}_{\mu,\mu_c} = \sqrt{(\widetilde{F}_{\mu,\mu_c})^T\widetilde{F}_{\mu,\mu_c}} = \lambda^{-1}_{\mu,\mu_c}\cdot U\;.
\end{equation}
The right polar decomposition takes the form
$\widetilde{F}_{\mu,\mu_c} = \polar(\widetilde{F}_{\mu,\mu_c})\,\widetilde{U}_{\mu,\mu_c}$.
From $\polar(\widetilde{F}_{\mu,\mu_c}) = \widetilde{F}_{\mu,\mu_c}\widetilde{U}_{\mu,\mu_c}^{-1}$ follows the scaling invariance $\polar(\widetilde{F}_{\mu,\mu_c}) = \polar(F)$.
For the non-classical parameter range $\mu > \mu_c \geq 0$, the quantity
\begin{equation}
  \tr{\widetilde{U}_{\mu,\mu_c}} = \tr{ \lambda_{\mu,\mu_c}^{-1} \cdot U} = \frac{\rho_{1,0}}{\rho_{\mu_,\mu_c}}\tr{U}
\end{equation}
plays an essential role. This leads us to
\begin{equation}
  \tr{\widetilde{U}_{\mu,\mu_c}} \geq 2 = \rho_{1,0}
  \quad \isequivto \quad
  \tr{\frac{\rho_{1,0}}{\rho_{\mu_,\mu_c}}\cdot U} \geq \rho_{1,0}
  \quad \isequivto \quad
  \tr{U} \geq \rho_{\mu,\mu_c}\;.
\end{equation}
In particular, this implies that the bifurcation in $\tr{U}$
allowing for non-classical optimal planar rotations is
characterized by the singular radius $\sradmm \eqdef \sradmmdef$.

\begin{theo}
  Let $F \in \GL^+(2)$. For $\mu_c \geq \mu > 0$
  the optimal microrotation angle is given by
  \begin{equation}
    \alpha_{\mu,\mu_c}(F)
    = \palpha(\widetilde{F}_{\mu,\mu_c})
    = \palpha(F) = \arccos\left(\frac{\tr{F}}{\tr{U}}\right)\;.
  \end{equation}
  For $\mu > \mu_c \geq 0$, the two optimal rotation angles
  are given by
  \begin{equation}
    \malpha^{\pm}_{\mu,\mu_c}(F) = \malpha^{\pm}_{1,0}(\widetilde{F}_{\mu,\mu_c})
  = \begin{cases}
    \palpha(F) = \arccos\left(\frac{\tr{F}}{\tr{U}}\right) &,\quad\text{if}\qquad \tr{U} < \rho_{\mu,\mu_c}\\
    \palpha(F) \mp\,\arccos\left(\frac{\sradmm}{\tr{U}}\right)&,\quad\text{if}\qquad \tr{U} \geq \rho_{\mu,\mu_c}\;.
  \end{cases}
\end{equation}
\end{theo}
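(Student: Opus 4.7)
The plan is to carry out the reconstruction of the optimal angles from the two limit cases by unpacking \leref{lem:parameter_reduction} and combining it with the already-established classical and non-classical results. The theorem is essentially an assembly of earlier pieces, so the argument should be organized as a careful bookkeeping of the rescaling $F \mapsto \widetilde{F}_{\mu,\mu_c} = \lambda^{-1}_{\mu,\mu_c}\,F$.

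First I would dispatch the classical range $\mu_c \geq \mu > 0$. By the first half of \leref{lem:parameter_reduction} we have $\wmm(\mrot\,;F) \sim W_{1,1}(\mrot\,;F)$, and \coref{cor:reduction:grioli} identifies the unique minimizer as the polar factor $\polar(F)$. Combined with the explicit planar representation in \coref{cor:polar_planar}, this immediately yields $\alpha_{\mu,\mu_c}(F) = \palpha(F) = \arccos(\tr{F}/\tr{U})$. The identity $\palpha(\widetilde{F}_{\mu,\mu_c}) = \palpha(F)$ is automatic from the scaling invariance of the polar factor noted just above the theorem statement, so no additional work is needed.

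For the non-classical range $\mu > \mu_c \geq 0$, I would invoke the second half of \leref{lem:parameter_reduction} to reduce to the limit energy: $\wmm(\mrot\,;F) \sim \wsym(\mrot\,;\widetilde{F}_{\mu,\mu_c})$. Thus the optimal angles for $\wmm(\cdot\,;F)$ coincide with those for $\wsym(\cdot\,;\widetilde{F}_{\mu,\mu_c})$, which are supplied by \theref{theo:malpha10pm} applied to the argument $\widetilde{F}_{\mu,\mu_c}$. This gives the first equality $\malpha^\pm_{\mu,\mu_c}(F) = \malpha^\pm_{1,0}(\widetilde{F}_{\mu,\mu_c})$ directly.

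The remaining task is to translate the piecewise formula of \theref{theo:malpha10pm} back to the original $F$. I would use the three facts assembled just before the theorem statement: (i) $\polar(\widetilde{F}_{\mu,\mu_c}) = \polar(F)$ so that $\palpha(\widetilde{F}_{\mu,\mu_c}) = \palpha(F)$; (ii) the bifurcation condition transforms as $\tr{\widetilde{U}_{\mu,\mu_c}} \geq 2 \isequivto \tr{U} \geq \sradmm$; (iii) the inner argument of the $\arccos$ transforms as
\begin{equation*}
\frac{2}{\tr{\widetilde{U}_{\mu,\mu_c}}} \;=\; \frac{2\,\lambda_{\mu,\mu_c}}{\tr{U}} \;=\; \frac{\sradmm}{\tr{U}}\;,
\end{equation*}
using $\lambda_{\mu,\mu_c} = \sradmm/\rho_{1,0} = \sradmm/2$ from \deref{defi:reduction:rescaling}. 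Substituting into the two branches of \theref{theo:malpha10pm} yields exactly the claimed piecewise expression. There is no real obstacle here; the only care required is to track the sign on $\pm\arccos(\cdot)$ correctly through the identity $\malpha = \palpha - \beta$ used in \theref{theo:malpha10pm}, and to observe that the branches join continuously at the bifurcation value $\tr{U} = \sradmm$, where $\arccos(\sradmm/\tr{U}) = 0$ and both cases reduce to $\palpha(F)$.
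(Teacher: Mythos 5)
Your proposal is correct and follows essentially the same route as the paper: dispatch the classical range via \coref{cor:reduction:grioli} and \coref{cor:polar_planar}, then use the second half of \leref{lem:parameter_reduction} together with \theref{theo:malpha10pm} applied to $\widetilde{F}_{\mu,\mu_c}$, and finally unwind the rescaling using scaling invariance of the polar factor, the bifurcation-threshold translation $\tr{\widetilde{U}_{\mu,\mu_c}} \geq 2 \isequivto \tr{U} \geq \sradmm$, and $\tfrac{2}{\tr{\widetilde{U}_{\mu,\mu_c}}} = \tfrac{\sradmm}{\tr{U}}$. Your step (iii) makes explicit a small computation the paper performs implicitly, but there is no difference of substance.
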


\begin{proof}
  By~\coref{cor:reduction:grioli}, $\polar(F)$ is uniquely optimal
  for the classical parameter range $\mu_c \geq \mu > 0$.
  The associated rotation angle $\palpha(F)$ is immediately obtained
  from the explicit formula for the polar factor given in~\coref{cor:polar_planar}.
  Let us now discuss the more delicate non-classical parameter regime
  $\mu > \mu_c \geq 0$. Here, the rescaling $\widetilde{F}_{\mu,\mu_c}$
  plays a decisive role. First, the condition $0< \tr{U} < \sradmm$ is
  equivalent to $0 < \tr{\widetilde{U}_{\mu,\mu_c}} < 2$. In this case,
  the polar factor $\polar(\widetilde{F}_{\mu,\mu_c}) = \polar(F)$ is uniquely
  optimal. For the parameter domain $\tr{U} \geq \sradmm$ which is equivalent
  to $\tr{\widetilde{U}_{\mu,\mu_c}} \geq 2$, however, we obtain
  the optimal relative rotation angles
  \begin{equation}
    \beta^{\pm}_{\mu,\mu_c}(F)
    = \beta^{\pm}_{1,0}(\widetilde{F}_{\mu,\mu_c})
    = \pm\arccos\left(\frac{\rho_{1,0}}{\tr{\widetilde{U}_{\mu,\mu_c}}}\right)
    = \pm\arccos\left(\frac{\rho_{\mu,\mu_c}}{\tr{U}}\right)\;.
  \end{equation}
  This gives $\alpha_{1,0}^\pm(\widetilde{F}_{\mu,\mu_c})
  = \palpha(\widetilde{F}_{\mu,\mu_c}) - \beta^{\pm}_{1,0}(\widetilde{F}_{\mu,\mu_c})
  = \palpha(F) - \beta^{\pm}_{1,0}(\widetilde{F}_{\mu,\mu_c})$.\qedhere
\end{proof}%
% end input /Volumes/HDD/Workspace/papers/2015/Cosserat2D/src/minimization.tex
 %
\countres
\makeatletter{}%
% start input /Volumes/HDD/Workspace/papers/2015/Cosserat2D/src/shear.tex
\section{Optimal rotations for planar simple shear}
\label{sec:shear}
We now apply our previous optimality results to simple shear deformations.
Previously, in~\cite{Neff:2009:SSNC}, Neff and M\"unch contributed
the optimal planar rotations for simple shear. A {\bf simple shear of amount}
$\gamma \in \R$ is a homogeneous linear deformation represented by a matrix
of the form
\begin{equation}
  F_{\gamma} \eqdef
  \begin{pmatrix}
    1 & \gamma\\
    0 & 1
  \end{pmatrix}\;.
\end{equation}
In this section we derive the energy-minimizing rotation angles
$\malpha_{\mu,\mu_c}(\gamma) \eqdef \malpha_{\mu,\mu_c}(F_\gamma)$ for
simple shear.

Let us shortly consider the classical limit case $(\mu,\mu_c) = (1,1)$
which represents the entire classical parameter
range $\mu_c \geq \mu > 0$. Essentially due to~\theref{theo:intro:grioli}
the polar rotation $\polar(F_\gamma)$ is then uniquely optimal,
see~\coref{cor:reduction:grioli}. Thus, we proceed with the non-classical
limit case $(\mu,\mu_c) = (1,0)$ which represents the entire non-classical
parameter range $\mu > \mu_c \geq 0$, as we have seen
in~\leref{lem:parameter_reduction}.

Let us collect some properties of simple shear $F_\gamma$. We have
$\hsnorm{F_\gamma}^2 = 2 + \gamma^2$ and $\det{F_{\gamma}} = 1$, i.e.,
simple shear is volume preserving for any amount $\gamma$. This
allows us to compute
\begin{equation}
  \tr{U_\gamma} \;=\; \sqrt{\hsnorm{F_\gamma}^2 + 2\,\det{F_\gamma}}
               \;=\; \sqrt{4 + \gamma^2} \geq 2\;.
\end{equation}
Thus, the reduced energy always satisfies
$W^{\rm red}_{1,0}(F_\gamma) = W^{(3)}(F_\gamma)$ for simple shear $F_\gamma$, i.e.,
the non-classical branch is always optimal.
\begin{cor}[Optimal non-classical Cosserat rotations for simple shear]
  Let $(\mu,\mu_c) = (1,0)$ and let $F_{\gamma} \in \GL^+(2)$ be a simple
  shear of amount $\gamma \in \R$. Then,
  \begin{equation} \gamma \neq 0
    \quad\Longrightarrow\quad
    \rpolar^\pm_{1,0}(F_\gamma) \;\mathbf{\neq}\; \polar(F_\gamma)\;.
  \end{equation}
\end{cor}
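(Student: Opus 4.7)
The plan is to apply Theorem on $\malpha^\pm_{1,0}$ directly to the simple shear $F_\gamma$, using the preparatory calculation already assembled in this section. First I would record that $\tr{U_\gamma} = \sqrt{4+\gamma^2}$, which is strictly greater than $\rho_{1,0} = 2$ precisely when $\gamma \neq 0$. This places us on the non-classical branch of the theorem, so the optimal relative rotation angles are
\begin{equation*}
  \beta^{\pm}_{1,0}(F_\gamma) \;=\; \pm\arccos\!\left(\frac{2}{\sqrt{4+\gamma^2}}\right)\;.
\end{equation*}

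Next I would observe that for $\gamma \neq 0$ the argument $2/\sqrt{4+\gamma^2}$ lies strictly in $(0,1)$, so $\beta^\pm_{1,0}(F_\gamma) \neq 0$. Combining this with the identification $\hat{R}(\beta) = \mrot(\malpha)^T\polar(F_\gamma)$ introduced in the proof of Lemma on symmetry of the first Cosserat deformation tensor, a vanishing relative angle is equivalent to $\rpolar^{\pm}_{1,0}(F_\gamma) = \polar(F_\gamma)$. Since $\beta^\pm \neq 0$, this equivalence yields the desired strict inequality $\rpolar^{\pm}_{1,0}(F_\gamma) \neq \polar(F_\gamma)$.

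There is essentially no obstacle here: the statement is a clean corollary of the explicit formula from Theorem on $\malpha^\pm_{1,0}$ together with the volume-preserving property $\det F_\gamma = 1$ that forces $\tr{U_\gamma} \geq 2$. The only subtle point worth spelling out is the strictness of the bifurcation threshold: $\tr{U_\gamma} = 2$ exactly at $\gamma = 0$, and both non-classical branches coalesce at the polar factor there, so the hypothesis $\gamma \neq 0$ is exactly what rules out the degenerate case in which $\polar(F_\gamma)$ could coincide with $\rpolar^{\pm}_{1,0}(F_\gamma)$.
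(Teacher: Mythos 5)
Your proposal is correct and follows essentially the same route as the paper: both compute $\tr{U_\gamma} = \sqrt{4+\gamma^2} > 2$ for $\gamma \neq 0$ and conclude that the optimal relative rotation angle $\beta^\pm_{1,0}(F_\gamma) = \pm\arccos\bigl(2/\sqrt{4+\gamma^2}\bigr)$ is strictly nonzero, so $\rpolar^\pm_{1,0}(F_\gamma) \neq \polar(F_\gamma)$. The only cosmetic difference is that you invoke the explicit-angle theorem while the paper cites the characterization of critical rotations, but the underlying argument is identical.
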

\begin{proof}
  First, $\tr{U_\gamma} \geq 2$ for all $\gamma \in \R$
  and with~\theref{theo:char_rcrit10} the optimal
  relative rotation angle $\beta \in [-\pi,\pi]$ satisfies
  \begin{equation}
    \abs{\beta(U_\gamma)} = \arccos(\frac{2}{\tr{U_\gamma}})\quad \in [0,\pi]\;.
  \end{equation}
  For $\gamma \neq 0$, it is easy to see that $\tr{U_\gamma}^2 > 4$. Since
  $\arccos(2/x)$ is strictly increasing for $x \geq 2$, we finally conclude:
  \begin{equation*}
    0\; < \; \abs{\beta(F_{\gamma})} \;=\; \arccos(\frac{2}{\tr{U_\gamma}}) \;=\; \arccos(\frac{2}{\sqrt{4 + \gamma^2}})\;.\tag*{\hspace{-1em}\qedhere}
  \end{equation*}
\end{proof}

\begin{rem}[Symmetry of the first Cosserat deformation
            tensor $\mstretch$ in simple shear]
  A simple shear $F_\gamma$ by a non-zero amount $\gamma \neq 0$
  automatically generates an optimal microrotational response
  $\rpolar^\pm(F_\gamma)$ which deviates from the continuum
  rotation $\polar(F)$. This implies that the associated first
  Cosserat deformation tensor $\mstretch^\pm_{1,0}(F_\gamma) \eqdef \rpolar_{1,0}^\pm(F_\gamma)^TF_\gamma$ is not symmetric for any
  $\gamma \neq 0$; cf.~\leref{lem:mstretch_symmetry}.
\end{rem}

\begin{rem}[Consistency with~\cite{Neff:2009:SSNC}]
  It is not hard to show that the explicit minimizers
  $\rpolar^\pm_{1,0}(F_\gamma)$ for the optimal
  Cosserat rotations previously obtained do exactly match those
  computed in~\cite{Neff:2009:SSNC}[p. 12, Equation (3.24)]. We have
  found the following identity to be helpful for the
  verification: $\arctan(\frac{\gamma}{2})
  = \sign(\gamma)\,\arccos(\frac{2}{\sqrt{4 + \gamma^2}})
  = \sign(\gamma)\,\arccos(\frac{2}{\tr{U_\gamma}})$\;.
\end{rem}

\begin{figure}[t]
  \begin{center}
    \begin{tikzpicture}
      \node (Pic) at (0,0)
            {\includegraphics[width=10cm]{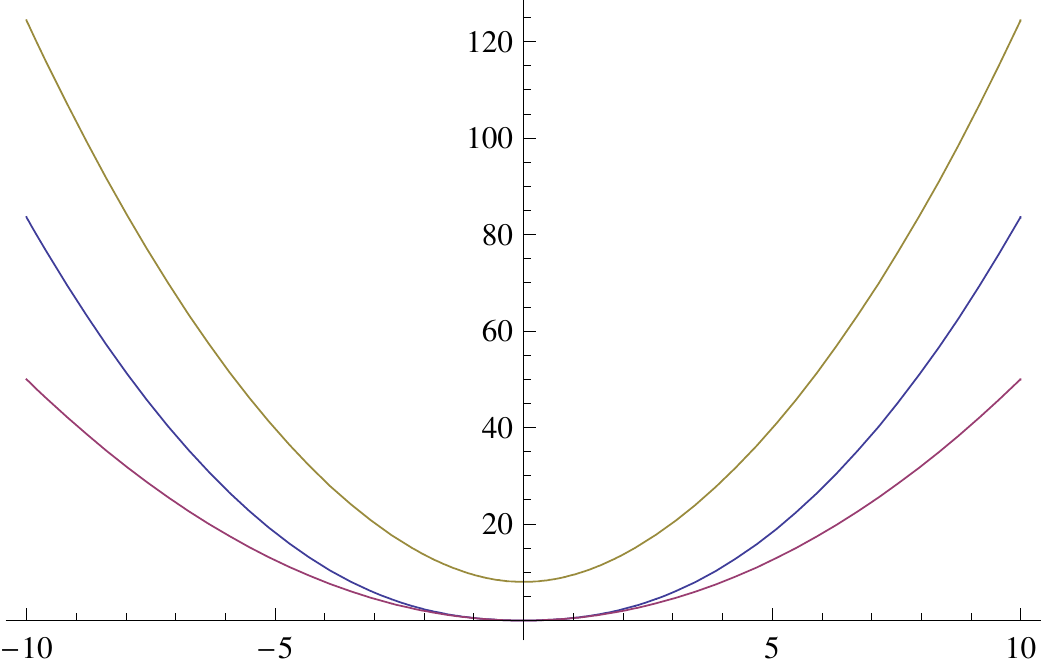}};
      \node (W1) at (-5.55,2.85) {$W^{(1)}(F_\gamma)$};
      \node (W2) at (-5.55, 1.05) {$W^{(2)}(F_\gamma)$};
      \node (W3) at (-5.55,-0.5) {$W^{(3)}(F_\gamma)$};
      \node (gamma) at (5.35,-2.8) {$\gamma$};
    \end{tikzpicture}
  \end{center}
  \begin{caption}[Critical energy levels $W^{(i)}$ for the case of simple shear.]%
    { \label{fig:W123_simple_shear}
      Plot of the critical energy levels $W^{(i)}(F_\gamma)$,~$i = 1,2,3$,
      of $\wsym$ for a simple shear $F_\gamma$ of amount $\gamma$.
      Note that \mbox{$W^{(1)} \geq W^{(2)} \geq W^{(3)}$}. The critical
      energy levels are realized by
      \mbox{$W^{(1)}\; \hat{=}\; -\polar(F_\gamma)$},
      \mbox{$W^{(2)}\; \hat{=}\; +\polar(F_\gamma)$} and
      \mbox{$W^{(3)}\; \hat{=}\; \rpolar^\pm_{1,0}(F_\gamma)$}, respectively.
    }
  \end{caption}
\end{figure}

\begin{figure}[H]
  \hspace{1.65cm}
  \begin{tikzpicture}
    \node (Pic) at (0,0)
          {\includegraphics[width=10cm]{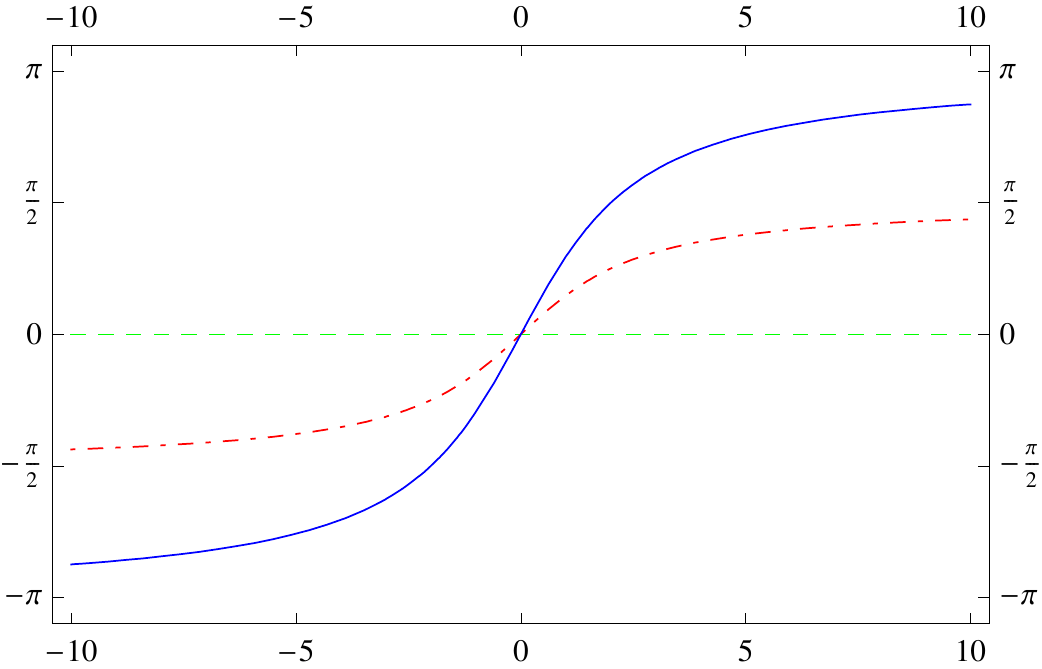}};
          \node (a1) at (3.05, 2.40) {$\malpha^{\,+}_{1,0}(\gamma)$};
          \node (a2) at (3.05, 1.35) {$\palpha(\gamma)$};
          \node (a3) at (3.05, 0.25) {$\malpha^{\,-}_{1,0}(\gamma)$};
          \draw[->,line width = 0.1pt] (-0.5,-3.75) -- (-1,-3.75);
          \node (gamma) at (0,-3.75) {$\gamma$};
          \draw[->,line width = 0.1pt] (0.5,-3.75) -- (1,-3.75);
          \draw[->,line width = 0.1pt] (-5.5,-0.5) -- (-5.5,-1);
          \node (gamma) at (-5.5,0) {$\alpha(\gamma)$};
          \draw[->,line width = 0.1pt] (-5.5,0.5) -- (-5.5,1);
  \end{tikzpicture}
  \begin{caption}[Optimal microrotation angles for $\wsym$ and simple shear.]%
    {Plot of the optimal microrotation angles $\malpha^\pm_{1,0}(\gamma)$
      for $\wsym$ and simple shear $F_\gamma$ of amount $\gamma \in \R$. The negative
      optimal branch $\malpha^{-}_{1,0}(\gamma)$ [dashed green curve]
      exactly eliminates the angle $\palpha(F)$ and vanishes identically.
      The positive branch $\malpha^{+}_{1,0}(\gamma)$
      [continuous blue curve] corresponds to a rotation by
      $2\,\palpha(\gamma)$ [dot-dashed red curve]. Note the symmetry
      w.r.t. to the continuum rotation angle $\palpha(\gamma)$}
    \label{fig:alphamin_simple_shear}
  \end{caption}
\end{figure}

\subsection{Simple glide and cancellation of the polar factor}
For the case of simple shear, one of the optimal Cosserat rotations for
the shear-stretch energy $\wsym$ exactly cancels
the polar factor. More precisely, one of the two rotations
$\rpolar^\pm(F_\gamma)$ is the identity element $\id_2 \in \SO(2)$, while
the other solution is given by $\polar(F_\gamma)^2$, i.e., a
rotation by $2\,\palpha(F_\gamma)$, see also~\figref{fig:alphamin_simple_shear}.
It would be quite intriguing if this ``gliding'' behavior
were specific to simple shear, but, as it turns out, it is possible to
construct other examples showing the same behavior.

To see this, note first that
\begin{equation}
  \abs{\palpha} \stackrel{\text{(Cor. \ref{cor:polar_planar}})}{=} \arccos\left(\frac{\tr{F}}{\tr{U}}\right),
  \quad\text{and}\quad
  \abs{\beta} \stackrel{\text{(Thm. \ref{theo:malpha10pm})}}{=} \arccos\left(\frac{2}{\tr{U}}\right)\;.
\end{equation}
The condition that one of the relative rotations cancels the continuum
rotation is given by
\begin{equation}
    \abs{\beta} = \abs{\palpha}
    \quad\isequivto\quad
    \frac{\tr{F}}{\tr{U}} = \frac{2}{\tr{U}}\;,\label{eq:loc:cancel_polar_cond}
\end{equation}
whenever $\tr{U} \geq 2$. Hence the set of
matrices for which the polar rotation is canceled by
a minimizing relative rotation is given by
$\setdef{F \in \GL^+(2)}{\tr{F} = 2 \land \tr{U} \geq 2}$.
This set is nonempty, because it contains the family of simple
shears $F_{\gamma}$. In order to see that this set
also contains homogeneous deformations which are not simple
shears, we consider the matrix
\begin{equation}
  \id^* \eqdef
  \begin{pmatrix}
    1 & 0\\
    0 & -1
  \end{pmatrix}
\end{equation}
and set
\begin{equation}
  F_{\gamma,\kappa} \eqdef F_\gamma + \kappa\id^*
  \quad\text{and}\quad
  U_{\gamma,\kappa} \eqdef   \sqrt{F_{\gamma,\kappa}^T\,F_{\gamma,\kappa}}\;.
\end{equation}
Note that $\det{F_{\gamma,\kappa}} = 1 - \kappa^2$ implies that
$F_{\gamma, \kappa} \in \GL^+(2)$ for $0 < \abs{\kappa} < 1$. Further,
\begin{equation}
  \forall \gamma,\kappa \in \R: \quad \tr{F_{\gamma,\kappa}} = \tr{F_\gamma}
  \quad\text{and}\quad
  \tr{JF_{\gamma,\kappa}} = \tr{JF_\gamma}\;,
\end{equation}
which implies
\begin{equation}
  \tr{U_{\gamma,\kappa}}^2
= \tr{F_{\gamma,\kappa}}^2 + \tr{J(F_{\gamma,\kappa})}^2
= \tr{F_\gamma}^2 + \tr{JF_\gamma}^2  = \tr{U_\gamma}^2\;.
\end{equation}
Hence, both quantities $\tr{U_{\gamma,\kappa}}$ and $\tr{F_{\gamma,\kappa}}$
are independent of $\kappa$ and condition~\eqref{eq:loc:cancel_polar_cond}
is automatically satisfied for all admissible $F_{\gamma,\kappa}$, i.e.,
for $0 < \abs{\kappa} < 1$.

We conclude that, given any simple shear $F_\gamma$ of amount $\gamma \in \R$,
there is a one parameter family $F_{\gamma,\kappa}, 0 < \abs{\kappa} < 1$
of matrices that are not simple shears for which one of the
optimal relative rotations $\hat{R}$ exactly cancels the
continuum rotation $\polar(F)$. The interesting ``glide behavior''
observed in \secref{sec:shear} is not specific to simple shear.

% end input /Volumes/HDD/Workspace/papers/2015/Cosserat2D/src/shear.tex
 %
\countres
\makeatletter{}%
% start input /Volumes/HDD/Workspace/papers/2015/Cosserat2D/src/conclusion.tex
\section{Conclusion}

In~\secref{sec:reduction}, we have seen that it is sufficient
to construct energy-minimizing rotations for the classical
limit-case $(\mu,\mu_c) = (1,0)$ and the non-classical limit
case $(\mu,\mu_c) = (1,1)$, respectively. For the classical parameter
range $\mu_c \geq \mu > 0$, the unique minimizing rotation
for $W_{\mu,\mu_c}(\mrot\,;F)$ is given by $\polar(F)$, in any dimension
$n$. For $\mu = \mu_c$, the reduced Cosserat shear energy can be
formally reduced to
\begin{equation}
  W^{\rm red}_{\mu,\mu}(F) = W_{\mu,\mu}(\polar(F)\,;F) = W_{\rm Biot,\mu,0}(F)\;.
\end{equation}
Hence, setting the Cosserat curvature coefficient $L_c = 0$, one can
expect the full quadratic Cosserat model to behave essentially like
a classical Biot model, see, e.g., the introduction to~\cite{Neff_Biot07}.

However, a fundamental motivation to use extended continuum models
such as Cosserat models, is to generate solutions showing non-classical
effects. For the quadratic Cosserat model (without curvature),
this is the case if there is a deviation $R \neq \polar(F)$, since
the model (formally) reduces to the well-known
Biot energy otherwise. In~\secref{sec:minimization}, we have
shown that this is only to be expected for the non-classical
parameter range $\mu > \mu_c \geq 0$. If non-classical solutions
should be generated already in the identity $\id_2$, then we even
have to require $\mu_c = 0$, since $\rho_{\mu,\mu_c} > 2$ otherwise.
The existence of the presented non-classical minimizers
$\rpolar(F)$ is highly interesting. In strong contrast,
if we replace the non-symmetric strain tensor $\mstretch - \id$
by $\log\,\mstretch$ in~\probref{intro:prob:weighted}, which is natural
in view of the Cartan decomposition of $\gl(n)$, one can show
that \emph{no} non-classical solutions exist for arbitrary
$\mu > 0$ and $\mu_c \geq 0$
\begin{equation}
  \argmin{\mrot\,\in\,\SO(n)}{%
    \left\{
    \mu\, \hsnorm{\sym\log(\mrot^TF)}^2
    \,+\,
    \mu_c\,\hsnorm{\skew\log(\mrot^TF)}^2 \right\}
    = \{\polar(F)\}
  }\;.
\end{equation}

For a proof and a deep discussion of the nature and properties
of logarithmic strain measures,
see~\cite{Neff:2015:GLS,Lankeit:2014:MML,Neff:2014:LMP}.

In our introduction, we have stated that the solution
to~\probref{intro:prob:weighted} for $n \geq 3$ is currently out
of reach. However, we have successfully computed non-classical
critical Cosserat microrotations for $n = 3$ using a parametrisation
by unit quaternions and computational algebra. Further, we
have managed to select the energy minimal branches, experimentally.
An extensive numerical validation shows, moreover, that our
candidates are very likely the global minimizers. The
mechanisms discovered for the case $n = 2$ in the present work
do carry over to the case $n = 3$ quite literally up to the
determination of the microrotation axis. This is the content
of a forthcoming second part of this paper~\cite{Fischle:2015:QC3D}.
For dimensions $n > 3$, the weighted~\probref{intro:prob:weighted}
is, to the best of our knowledge, still completely open. It seems
to us, however, reasonable to guess that a transformation into the
principal directions of stretch, i.e., the eigendirections of $U$,
is a good plan of attack.

\begin{rem}[Final Conclusion]
  To ascertain the complete absence of a non-classical response within
  a geometrically nonlinear quadratic Cosserat-micropolar
  shear-stretch energy, one must choose a classical
  parameter set, i.e., $\mu_c \geq \mu > 0$.
\end{rem}

%
%
% end input /Volumes/HDD/Workspace/papers/2015/Cosserat2D/src/conclusion.tex
 %
\countres

\addcontentsline{toc}{section}{References}
\bibliographystyle{plain}
\bibliography{./literaturNeff,./fischle}

\appendix
\renewcommand{\theequation}{\Alph{section}.\arabic{equation}}
\makeatletter{}%
% start input /Volumes/HDD/Workspace/papers/2015/Cosserat2D/src/identities.tex
\section{Appendix}

\subsection{Some planar matrix identities for $U$ and $F$}
Let $n = 2$. Applying the trace to both sides of the Cayley-Hamilton equation
and exploiting linearity, we obtain
\begin{equation}
  \tr{X^2 - \tr{X}\,X + \det{X}\,\id_2} = \tr{0}
  \quad\isequivto\quad
  \tr{X^2} = \tr{X}^2 - 2\,\det{X}\;.\label{eq:square_trace_id}
\end{equation}
This leads us to the following identity.
\begin{lem}
  \label{lem:trU}
  Let $F \in \GL^+(2)$ and $U \eqdef \sqrt{F^TF} \in \Psym(2)$. Then,
  \begin{align}
    \tr{U} &= \sqrt{\hsnorm{U}^2 + 2\,\det{U}} = \sqrt{\hsnorm{F}^2 + 2\,\det{F}}\;.
  \end{align}
\end{lem}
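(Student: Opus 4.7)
The plan is to apply the Cayley--Hamilton trace identity \eqref{eq:square_trace_id} to $X = U$ and then translate the resulting expression from $U$-quantities to $F$-quantities using the defining relation $U^2 = F^TF$.

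First I would set $X = U$ in \eqref{eq:square_trace_id} to obtain $\tr{U^2} = \tr{U}^2 - 2\det{U}$. Since $U \in \Psym(2)$ is symmetric, $U^TU = U^2$, hence $\hsnorm{U}^2 = \tr{U^TU} = \tr{U^2}$. Substituting and rearranging yields $\tr{U}^2 = \hsnorm{U}^2 + 2\det{U}$. Because $U$ is positive definite, $\tr{U} > 0$, so I may take the positive square root to recover the first claimed equality
\begin{equation*}
\tr{U} = \sqrt{\hsnorm{U}^2 + 2\det{U}}\;.
\end{equation*}

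For the second equality, I would use the two identities $\hsnorm{F}^2 = \tr{F^TF} = \tr{U^2} = \hsnorm{U}^2$ and $\det{F} = \det{U}$. The latter follows from $\det{U}^2 = \det{U^TU} = \det{F^TF} = \det{F}^2$ combined with $\det{U} > 0$ (positive definiteness) and $\det{F} > 0$ (since $F \in \GL^+(2)$). Substituting these replacements into the first equality gives
\begin{equation*}
\tr{U} = \sqrt{\hsnorm{F}^2 + 2\det{F}}\;,
\end{equation*}
which completes the chain of equalities.

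There is no real obstacle: the entire argument is an application of the two-dimensional Cayley--Hamilton theorem together with the elementary invariants of the right polar decomposition. The only subtlety worth flagging is the sign choice when extracting the square roots, which is fully controlled by the positive definiteness of $U$ and the assumption $F \in \GL^+(2)$.
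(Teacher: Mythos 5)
Your proof is correct and follows essentially the same route as the paper: apply the planar Cayley--Hamilton trace identity \eqref{eq:square_trace_id} to $U$, use $\hsnorm{U}^2 = \tr{U^2}$ and $\tr{U} > 0$ to take the positive root, and then pass to $F$ via $\hsnorm{F}^2 = \hsnorm{U}^2$ and $\det{F} = \det{U}$. The only cosmetic difference is that the paper obtains these last two identities from $U = \polar(F)^T F$ and orthogonal invariance, while you derive them directly from $U^2 = F^T F$; the substance is identical.
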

\begin{proof}
  Note first that $\tr{U^2} = \scalprod{U}{U} = \hsnorm{U}^2$ and
  that $\tr{U} > 0$. The expression in terms of $F \in \GL^+(2)$
  is implied by $\det{F} = \det{\polar(F)^TF} = \det{U}$
  and $\hsnorm{F}^2 = \hsnorm{\polar(F)^TF}^2 = \hsnorm{U}^2$.\qedhere
\end{proof}%
%
%
%
%
%
%
%
%
%
%
%
%
%
%
%
%
%
%
%
%
%
%
%
%

%

%
%
%
%
%
%
%
%
%
%
%

%

%
%

%
%
%
%
%
%
%
%
%
%
%
%
%
%
%
%
%
%
%
%
%
%
%
%
%
%
%
%
%

%
%
%
%
%
%
%
%
%
%
%
%
%
%
%
%
%

%
%
%
%
%
%
%
%
%
%
%
%
%
%
%
%
%
%
%
%
%
%
%
%
%
%
%
%
%
%
%
%
%
%
%
%
%
%
%
%
%
%
%
%
%
%
%
%
%
%
%
%
%
%
%
%
%

%
%
%
%
%
%
%
%
%
%
%
%
%
%
%
%
%
%
%
%
%
%
%
%
%
%
%
%
%
%

%
%
%
%
%
%
%
%
%
%
%
%
%
%
% end input /Volumes/HDD/Workspace/papers/2015/Cosserat2D/src/identities.tex
 %
\countres

\end{document}